\newcommand\reallywidehat[1]{%
	\savestack{\tmpbox}{\stretchto{%
			\scaleto{%
				\scalerel*[\widthof{\ensuremath{#1}}]{\kern-.6pt\bigwedge\kern-.6pt}%
				{\rule[-\textheight/2]{1ex}{\textheight}}
			}{\textheight}%
		}{0.5ex}}%
	\stackon[1pt]{#1}{\tmpbox}%
}
\newcommand\sh[1]{\ensuremath{\mathop{\text{\Large\fontencoding{T2A}\selectfont ш}}#1}}
\newcounter{dummy} \numberwithin{dummy}{section}
\newtheorem{theo}[dummy]{Theorem}
\newtheorem*{theo*}{Theorem}
\newtheorem{coro}[dummy]{Corollary}
\newtheorem{lem}[dummy]{Lemma}
\newtheorem{pro}[dummy]{Proposition}
\newtheorem{rem}[dummy]{Remark}
\newtheorem*{example*}{Exemple}
\newtheorem*{examples*}{Exemples}
\newtheorem*{thm*}{Theorem}
\newtheorem*{definition*}{D\'{e}finition}
\newtheorem*{lem*}{Lemme}
\newtheorem*{prop*}{proposition}
\newcommand{\co}{{\mathrm{Cone}}}
\newcommand{\Z}{{\mathbb{Z}}}
\newcommand{\Q}{{\mathbb{Q}}}
\title[ ]{ Poitou-Tate Duality for totally positive Galois cohomology }
\author[  H. Asensouyis, J. Assim, Z. Boughadi  \& Y. Mazigh]{Hassan Asensouyis, Jilali Assim, Zouhair Boughadi and  Youness Mazigh}
\address{ Moulay Ismail University of Mekn\`{e}s, Faculty of Sciences, Departement of Mathematics, B.P. 11201 Zitoune, 50000 Meknes, Morocco}
\email{\textcolor[rgb]{0.00,0.00,1.00}{hassan$_{-}$asensouyis@yahoo.fr }}
\email{\textcolor[rgb]{0.00,0.00,1.00}{j.assim@umi.ac.ma}}
\email{\textcolor[rgb]{0.00,0.00,1.00}{z.boughadi@edu.umi.ac.ma}}
\email{\textcolor[rgb]{0.00,0.00,1.00}{y.mazigh@umi.ac.ma}}
\keywords{ Poitou-Tate duality, Totally positive Galois cohomology, Mapping cone }
\subjclass[2020]{11R34, 12G05}
\begin{document}
	\maketitle
	\renewcommand{\abstractname}{Abstract}
	\begin{abstract}
		In this paper, we establish a Poitou-Tate's global duality for totally positive Galois cohomology. We illustrate this result in the case of the twisted module "à la Tate" $\Z_2(i),$ $i$ integer.
	\end{abstract}
	\section{Introduction}
	Let $F$ be a number field and let $p$ be a rational prime.	For  a finite set $S$ of primes of  $F$ containing the $p$-adic and the infinite primes, we denote by $S_{f}$ the set of finite primes in $S$ and $G_{F,S}$ the Galois group of the maximal algebraic extension $F_{S}$ of $F$ which is unramified outside $S.$\vskip 6pt
	For an odd prime $p$ and $n=1, 2,$ the global Poitou-Tate duality (e.g. \cite[5.1.6, p.114]{Nekovar06}) states that there is a perfect pairing
	\begin{equation}\label{Poitou-Tate pairing}
		\begin{array}{ccccc}
			\sh^{n}_{S_{f}}(M) & \times & \sh^{3-n}_{S_{f}}(M^{\ast}) & \xymatrix@=1.5pc{ \ar[r]&} &
			\Q_{p}/\Z_{p},
		\end{array}
	\end{equation}
	where
	\begin{equation*}	
		\mbox{$\sh^{n}_{S_{f}}(M)$}:=\xymatrix@=1.5pc{\ker( H^{n}(G_{F,S},M)\ar[r]& \displaystyle{\bigoplus_{v\in	S_{f}}H^{n}(F_{v},M)})}
	\end{equation*}
	and  $(.)^{\ast}$ means the Kummer dual.
	\vskip 7pt
	In the case $p=2,$ the non-triviality of the cohomology groups of the local absolute Galois group at real places  leads to several complications. To control these contributions from real infinite places, several authors (e.g.\,\cite{CKPS, rognes, Mazigh, AAM21}) use a slight variant of Galois cohomology, the so-called totally positive Galois cohomology introduced by Kahn in \cite{Ka93} after ideas of Milne \cite{Milne}.\vskip 6pt
	In this paper we establish a global Poitou-Tate duality for totally positive Galois cohomology.
	For $n=1, 2,$ let
	$\sh_{S_{f}}^{n,+}(M)$  be the kernels of the localization maps
	\begin{equation*}
		\mbox{$\sh_{S_{f}}^{n,+}(M)$}:=\xymatrix@=1.5pc{\ker(
			H^{n}_{+}(G_{F,S},M)\ar[r]& \bigoplus_{v\in S_{f}}H^{n}(F_{v},M)),}
	\end{equation*}
	where
	$H^{j}_{+}(.,.)$ denotes  the $j$-th totally positive Galois
	cohomology group (Section \ref{Totally positive Galois cohomology}). The following theorem summarizes the main result of this paper (see Theorem \ref{theorem1}):
	\begin{theo*} Let $F$ be a number field and let $M$ be a compact or discrete $\Z_{2}[[G_{F,S}]]$-module.
		\begin{enumerate}[label=(\roman*)]
			\item 	There is a perfect pairing
			\begin{equation*}
				\begin{array}{ccccc}
					\sh_{S_{f}}^{2,+}(M) & \times & \sh_{S_{f}}^{1}(M^{\ast}) & \xymatrix@=1.5pc{ \ar[r]&} &
					\Q_{2}/\Z_{2}.
				\end{array}
			\end{equation*}
			\item We have an exact sequence	
			\begin{equation*}
				\xymatrix@=2pc{0\ar[r]& Z(M) \ar[r]&
					\sh_{S_{f}}^{1,+}(M)\ar[r]& \sh_{S_{f}}^{2}(M^{\ast})^{\vee}\ar[r]&0,}
			\end{equation*}
			where an explicit description of the kernel $Z(M)$ is given in Lemma \ref{Lemma Z(M)}.
		\end{enumerate}
		The superscripts $(.)^{\ast}$ and $(.)^{\vee},$ respectively denote the Kummer dual and the Pontryagin dual.
	\end{theo*}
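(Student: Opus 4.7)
The plan is to leverage the defining exact triangle of totally positive cohomology and combine it with the classical Poitou--Tate nine-term sequence. Write $S_{\R}$ for the set of real places in $S$ and set $P^{n}_{S_{\R}}(M) := \bigoplus_{v \in S_{\R}} H^{n}(F_{v}, M)$. Kahn's mapping-cone construction produces a long exact sequence
\begin{equation*}
\cdots \to P^{n-1}_{S_{\R}}(M) \to H^{n}_{+}(G_{F,S}, M) \to H^{n}(G_{F,S}, M) \to P^{n}_{S_{\R}}(M) \to \cdots
\end{equation*}
relating positive and classical cohomology via restriction at real places. My first step is to record this sequence together with the natural factorisation of the finite-place localisation through $H^{\bullet}_{+}$.

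Next, I would form the commutative ladder comparing the finite-place localisation $H^{\bullet}_{+}(G_{F,S}, M) \to \bigoplus_{v \in S_{f}} H^{\bullet}(F_{v}, M)$ with the classical one $H^{\bullet}(G_{F,S}, M) \to \bigoplus_{v \in S_{f}} H^{\bullet}(F_{v}, M)$. A snake-lemma computation applied to this ladder yields a long exact sequence connecting $\sh^{\bullet,+}_{S_{f}}(M)$, $\sh^{\bullet}_{S_{f}}(M)$, and the real-place groups $P^{\bullet}_{S_{\R}}(M)$. This reduces the theorem to a question about classical $\sh$-groups corrected by explicit real-place contributions, which can in turn be handled by local Tate duality at real places.

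For part (i), I would extract the four-term piece of the above sequence around degree $2$ and splice it against the classical Poitou--Tate pairing $\sh^{2}_{S_{f}}(M) \times \sh^{1}_{S_{f}}(M^{\ast}) \to \Q_{2}/\Z_{2}$. At $p = 2$ this pairing fails to be perfect, and the defect is measured precisely by the real-place correction; the totally positive modification at level $2$ is designed to absorb exactly this defect. A five-lemma argument then upgrades the classical pairing into a perfect one for $\sh^{2,+}_{S_{f}}(M) \times \sh^{1}_{S_{f}}(M^{\ast})$. For part (ii), the same mechanism in degree $1$ produces the desired three-term exact sequence: the map $\sh^{1,+}_{S_{f}}(M) \to \sh^{2}_{S_{f}}(M^{\ast})^{\vee}$ is realised as the composition $\sh^{1,+}_{S_{f}}(M) \to \sh^{1}_{S_{f}}(M) \to \sh^{2}_{S_{f}}(M^{\ast})^{\vee}$ coming from the classical Poitou--Tate map, and its kernel $Z(M)$ is identified as the image of $P^{0}_{S_{\R}}(M) \to \sh^{1,+}_{S_{f}}(M)$, which should match the explicit description in Lemma~\ref{Lemma Z(M)}.

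The main obstacle is the careful identification of the connecting homomorphisms with the Poitou--Tate pairing and the verification that all maps are compatible with duality. In particular, I must track every sign and degree shift in the mapping-cone construction and check that the Artin--Verdier trace, when restricted to the positive complex, decomposes as the sum of local invariants at finite places together with the correct boundary term at real places. Once this compatibility is established, perfectness in (i) and exactness in (ii) follow from classical Poitou--Tate duality by diagram chasing.
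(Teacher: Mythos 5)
Your plan is not the paper's route, and as it stands it has two genuine gaps. The first is the appeal to a ``classical Poitou--Tate pairing $\sh{}_{S_{f}}^{2}(M)\times \sh{}_{S_{f}}^{1}(M^{\ast})\to\Q_{2}/\Z_{2}$ whose defect at $p=2$ is measured precisely by the real-place correction'': that statement is essentially the theorem itself, and you give no argument for it. At $p=2$ the standard perfect duality is between Tate--Shafarevich groups defined with Tate-modified cohomology over \emph{all} places of $S$, not the finite-place kernels $\sh{}_{S_{f}}^{n}$ used here, so there is no off-the-shelf pairing to splice against, and the five-lemma diagram you invoke is never constructed. Similarly, the ``snake lemma on the ladder'' does not produce a long exact sequence whose archimedean terms are the full groups $\bigoplus_{v\,\mathrm{real}}H^{\bullet}(F_{v},M)$: the gap between $\sh{}_{S_{f}}^{\bullet,+}$ and $\sh{}_{S_{f}}^{\bullet}$ is controlled by images of global classes in archimedean cohomology, not by the local groups themselves. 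The paper avoids classical Poitou--Tate altogether: the duality input is Nekov\'a\v{r}'s duality for compactly supported cohomology, $H^{n}(\co(\widehat{res}_{S}))\cong H^{2-n}(G_{F,S},M^{\ast})^{\vee}$ \cite[Proposition 5.7.4]{Nekovar06}, in which the archimedean places enter through the \emph{complete Tate cochain complexes}. Part (i) is then a two-line consequence of the exact sequence of Proposition \ref{exact sequence of finie places} together with local duality, and part (ii) comes from comparing, via the cubical cone diagram of Proposition \ref{pro cub}, the cone built from ordinary archimedean cochains with the one built from Tate cochains; this Tate modification in degrees $\le 0$ at the infinite places is the key idea missing from your sketch.

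The second gap is your identification of the kernel in (ii). Complex places do contribute in degree $0$ (the norm $N_{v}$ at a complex place is the identity, and Lemma \ref{Lemma Z(M)} produces summands indexed by complex places, visible in the computation of $Z(\Z_{2}(i))$), so restricting to real places already loses part of $Z(M)$. Moreover, even with all archimedean places, the image of $\bigoplus_{v\mid\infty}H^{0}(F_{v},M)$ in $H^{1}_{+}(G_{F,S},M)$ is $\ker\bigl(H^{1}_{+}(G_{F,S},M)\to H^{1}(G_{F,S},M)\bigr)$, which is in general strictly larger than $Z(M)$: by Lemma \ref{Lemma Z(M)}, $Z(M)$ records only the norm images $(\oplus_{v\mid\infty}N_{v})(\hat{C}^{-1}_{\infty})$ modulo global classes, i.e.\ the discrepancy between $H^{0}(F_{v},M)$ and $\hat{H}^{0}(F_{v},M)=H^{0}(F_{v},M)/N_{v}M$. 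A concrete failure: take $M=\Z_{2}$ and $F$ totally imaginary with $r_{2}\ge 2$. Your map $\sh{}_{S_{f}}^{1,+}(M)\to\sh{}_{S_{f}}^{2}(M^{\ast})^{\vee}$ is defined as a composite through $\sh{}_{S_{f}}^{1}(M)$, hence kills the subgroup $\ker(H^{1}_{+}\to H^{1})\cong\Z_{2}^{\,r_{2}-1}\neq 0$ of $\sh{}_{S_{f}}^{1,+}(\Z_{2})$ (this subgroup lies in $\sh{}_{S_{f}}^{1,+}$ because the finite-place localization factors through $H^{1}$), whereas your proposed kernel, the image of the real-place $H^{0}$'s, is zero; so your three-term sequence cannot be exact. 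The map in the theorem is not the composite through $\sh{}_{S_{f}}^{1}(M)$ but the comparison map induced by $\tau_{\infty}$ on mapping cones, and its kernel has to be computed as in the paper.
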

	For an integer $i$, let $M=\Z_{2}(i)$ be the twisted module "à la Tate" of the ring of dyadic integers $\Z_{2}.$ We have the following exact sequence
	\begin{equation*}
		\xymatrix@=2pc{0\ar[r]& Z(\Z_{2}(i))\ar[r]& \sh_{S_{f}}^{1,+}(\mathbb{Z}_{2}(i))\ar[r]& \sh_{S_{f}}^{2}(\mathbb{Q}_{2}/\mathbb{Z}_{2}(1-i))^{\vee}\ar[r]&0,}
	\end{equation*}
	where
	\begin{equation*}
		Z(\Z_{2}(i))\cong \begin{cases}
			\qquad 0, \qquad \qquad\qquad \quad \qquad\quad\, \mbox{ if } i=0;\\\
			\underset{v \: is \: complex}{\bigoplus} \Z_{2}(i) \underset{v \: is \: real}{\bigoplus} 2\Z_{2}(i) ,\quad \mbox{ if $i\neq 0$ is even;} \\\
			\underset{v \: is \: complex}{\bigoplus} \Z_{2}(i), \quad\qquad \,\quad\qquad \mbox{ if $i$ is odd.}
		\end{cases}	
	\end{equation*}
	
	As an application, for a number field $F$ and $\mathcal{O}_{F,S}$ it's ring of $S$-integers, we realize the positive \'{e}tale wild kernel (\cite[Definition 2.2]{AAM21}) $WK_{2i-2}^{\mbox{\'{e}t},+}\mathcal{O}_{F,S}:=\sh_{S_{f}}^{2,+}(\mathbb{Z}_{2}(i)),$ for $i\geq 2$ an integer, as an Iwasawa module (Proposition \ref{Iwasawa description}). In particular, we get that the group $WK_{2i-2}^{\mbox{\'{e}t},+}\mathcal{O}_{F,S}$ is independent of the set $S$ containing the infinite and dyadic places of $F.$

	\section{Some homological algebra}\label{sectiontwo}
	We fix an abelian category and work with the corresponding category of complexes. For a complex $X=(X^{i},d^{i}_{X})_{i\in\Z}$ and an integer $n\in\Z,$ let $X[n]$ denote the complex given by the objects
	$(X[n])^{i}=X^{i+n}$ and the differentials $d^{i}_{X[n]}=(-1)^{n}d^{i+n}_{X}.$ For a morphism of complexes $\xymatrix@=1.5pc{ X\ar[r]^-{u}& Y,}$ the mapping  cone corresponding to $u$ is the complex 	
	\begin{equation*}
		\mathrm{Cone}(u):=Y\oplus X[1]
	\end{equation*}	
	with the differential
	\begin{equation*}
		d^{i}_{\mathrm{Cone}(u)}=\begin{pmatrix}
			d^{i}_{Y}& u^{i+1}\\
			0& -d^{i+1}_{X}
		\end{pmatrix}:\;\xymatrix@=2pc{ Y^{i}\oplus X^{i+1}\ar[r]& Y^{i+1}\oplus X^{i+2}.}
	\end{equation*}
	The distinguished triangle corresponding to $u$ is
	\begin{equation}\label{canonical form}
		\xymatrix@=2pc{X\ar[r]^-{u}& Y\ar[r]^-{j}& \mathrm{Cone}(u)\ar[r]^-{-\pi}&X[1],}
	\end{equation}
	where $j$ and  $\pi$ are  the canonical  injection and  projection  respectively (\cite[chap.II, \S 0, p.180]{Milne}). By definition, a distinguished triangle is isomorphic (in the derived category) to the form \eqref{canonical form}, or equivalently to
	\begin{equation*}
		\xymatrix@=2pc{ \mathrm{Cone}(u)[-1]\ar[r]^-{\pi[-1]}&X\ar[r]^-{u}& Y\ar[r]^-{j}& \mathrm{Cone}(u).}
	\end{equation*}
	Furthermore, a distinguished triangle
	\begin{equation*}
		\xymatrix@=2pc{ X\ar[r]& Y\ar[r]&Z\ar[r]&X[1]}
	\end{equation*}
	gives rise to a long exact sequence in cohomology
	\begin{equation*}
		\xymatrix@=1.5pc{\cdots\ar[r]& H^{r}(X)\ar[r]& H^{r}(Y)\ar[r]&H^{r}(Z)\ar[r]&H^{r+1}(X)\ar[r]&\cdots}
	\end{equation*}
	\noindent Recall also that, for a given commutative diagram
	\begin{equation*}
		\xymatrix@=1.5pc{X\ar[r]^-{u}\ar[d]_{f}& Y\ar[d]_{g}\\
			X_{1}\ar[r]^-{v}& Y_{1}}
	\end{equation*}
	we have a morphism of mapping cones (\cite[\S 3.1, p. 66]{Verdier})
	\begin{equation*}
		\xymatrix@=2pc{(g,f[1]):\; \co(u)\ar[r]&\co(v)}
	\end{equation*}
	where
	\begin{equation*}
		(g,f[1])^{i}=\begin{pmatrix}
			g^{i}& 0\\
			0& f^{i+1}
		\end{pmatrix}:\; \xymatrix@=2pc{Y^{i}\oplus X^{i+1}\ar[r]& Y_{1}^{i}\oplus X_{1}^{i+1}.}
	\end{equation*}
	The following proposition gives two interesting results about the mapping cone which will be useful in the sequel.
	\begin{pro}\label{Pro cone}	$ $
		\begin{enumerate}
			\item For any maps $u: X\longrightarrow Y$ and $v: Y\longrightarrow Z$ of  complexes, there is a distinguished triangle
			\begin{equation*}
				\xymatrix@=3pc{\co(v)[-1]\ar[r]&\co(u)\ar[r]^{(v,id_{X}[1])}&\co(v\circ u)\ar[r]^{(id_{Z}, u[1])}&\co(v).}
			\end{equation*}
			\item An exact commutative diagram of complexes
			\begin{equation*}
				\xymatrix@=3pc{0\ar[r]& X\ar[r]^-{f}\ar[d]^-{\alpha}& Y\ar[r]^-{g}\ar[d]^-{\beta}& Z\ar[r]\ar[d]^-{\gamma}&0\\
					0\ar[r]& X_{1}\ar[r]^-{f_{1}}& Y_{1}\ar[r]^-{g_{1}}& Z_{1}\ar[r]&0}
			\end{equation*}
			gives rise to a distinguished triangle
			\begin{equation*}
				\xymatrix@=2pc{\co(\alpha)\ar[r]&\co(\beta)\ar[r]&\co(\gamma)\ar[r]&\co(\alpha)[1].}
			\end{equation*}
		\end{enumerate}
	\end{pro}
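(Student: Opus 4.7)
The plan for (1) is to work directly at the cochain level. Both cone morphisms $(v,\mathrm{id}_{X}[1])$ and $(\mathrm{id}_{Z},u[1])$ are instances of the functorial construction recalled immediately before the proposition, applied respectively to the commuting squares with rows $u,v\circ u$ and verticals $\mathrm{id}_{X}, v$, and with rows $v\circ u, v$ and verticals $u,\mathrm{id}_{Z}$. In particular they are chain maps. To promote this candidate triangle to a distinguished one, I would exhibit a quasi-isomorphism $\mathrm{Cone}\bigl((v,\mathrm{id}_{X}[1])\bigr)\simeq\co(v)$. Writing
$$\mathrm{Cone}\bigl((v,\mathrm{id}_{X}[1])\bigr)^{i}=Z^{i}\oplus X^{i+1}\oplus Y^{i+1}\oplus X^{i+2},$$
the obvious projection onto $Z^{i}\oplus Y^{i+1}=\co(v)^{i}$ is the desired map; an explicit chain homotopy contracts the two $X$-summands against each other via the identity, with signs dictated by $d^{i}_{X[n]}=(-1)^{n}d^{i+n}_{X}$. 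This is essentially the content of the octahedral axiom realised at the level of complexes, and once the homotopy has been written down the triangle is distinguished by definition.

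For (2) I would rely on the functoriality of the cone construction to produce chain maps
$$(f_{1},f[1]):\co(\alpha)\longrightarrow\co(\beta),\qquad (g_{1},g[1]):\co(\beta)\longrightarrow\co(\gamma).$$
Since $\co(\alpha)^{i}=X_{1}^{i}\oplus X^{i+1}$, $\co(\beta)^{i}=Y_{1}^{i}\oplus Y^{i+1}$, $\co(\gamma)^{i}=Z_{1}^{i}\oplus Z^{i+1}$, the termwise exactness of the two hypothesised rows, applied in degrees $i$ and $i+1$ and assembled as a direct sum, immediately yields a short exact sequence of complexes
$$0\longrightarrow\co(\alpha)\longrightarrow\co(\beta)\longrightarrow\co(\gamma)\longrightarrow 0.$$
The standard principle that every such short exact sequence of complexes gives rise to a distinguished triangle in the derived category, with the connecting morphism playing the role of $\co(\gamma)\to\co(\alpha)[1]$, then closes the proof.

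The only point requiring any real care lies in part (1): the signs coming from $d^{i}_{X[n]}=(-1)^{n}d^{i+n}_{X}$ must be tracked carefully when writing down the contracting homotopy, but this is pure bookkeeping that is carried out in any standard treatment of the octahedral axiom, so one could alternatively simply invoke that axiom directly. Part (2), by contrast, reduces cleanly to a degreewise direct-sum argument with no sign subtleties and presents no obstacles.
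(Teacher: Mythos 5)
The paper itself offers no argument here: it simply cites Milne, \emph{Arithmetic Duality Theorems}, Chap.~II, \S 0, Prop.~0.10. Your proposal replaces that citation by a self-contained cochain-level verification, which is a legitimate and arguably more informative route: part (1) is the standard explicit proof of the octahedral axiom for cones, and part (2) reduces to the observation that the two cone morphisms fit into a termwise short exact sequence
$0\to\co(\alpha)\to\co(\beta)\to\co(\gamma)\to 0$,
which is exactly the situation handled by the paper's own Proposition~\ref{pro coker and cone} (whose proof does not depend on the present statement, so there is no circularity). Part (2) as you present it is complete.

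One correction is needed in part (1). Writing
$\co\bigl((v,\mathrm{id}_{X}[1])\bigr)^{i}=Z^{i}\oplus X^{i+1}\oplus Y^{i+1}\oplus X^{i+2}$,
the \emph{naive} projection $(z,x,y,x')\mapsto(z,y)$ onto $\co(v)^{i}$ is not a chain map: computing with the differential of the double cone one finds the defect $(v u(x),\,-u(x'))$. The correct comparison map is either the projection corrected by $u$, namely $(z,x,y,x')\mapsto(z,\,y+u(x))$, or (dually) the inclusion $\co(v)\hookrightarrow\co\bigl((v,\mathrm{id}_{X}[1])\bigr)$, $(z,y)\mapsto(z,0,y,0)$, whose cokernel is $\co(\mathrm{id}_{X[1]})$ and hence acyclic, so it is a quasi-isomorphism. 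The corrected projection also makes the identification of the second map transparent: composing the canonical injection $\co(v\circ u)\to\co\bigl((v,\mathrm{id}_{X}[1])\bigr)$ with $(z,x,y,x')\mapsto(z,y+u(x))$ gives precisely $(\mathrm{id}_{Z},u[1])$, so the triangle in the statement is isomorphic to the standard triangle of the morphism $(v,\mathrm{id}_{X}[1])$ and is therefore distinguished. With this repair (or by invoking the octahedral axiom outright, as you note), your argument is sound; as written, the ``obvious projection plus contracting homotopy'' step would not compile into an actual chain map.
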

	\begin{proof}
		see \cite[chap.II, \S 0, Prop. 0.10]{Milne}
	\end{proof}
	\begin{pro}\label{pro coker and cone}
		Let
		\begin{equation*}
			\xymatrix@=2pc{0\ar[r]& X\ar[r]^-{u}& Y\ar[r]^-{v}& Z\ar[r]& 0}
		\end{equation*}
		be a  short exact sequence of complexes. Then, the  maps
		\begin{equation*}\label{coneZ}
			\begin{array}{ccccccccc}
				q: & \mathrm{Cone}(u)& \xymatrix@=2pc{\ar[r]&}& Z\\
				& (y,x)&\xymatrix@=2pc{\ar@{|->}[r]&}& v(y)
			\end{array}
			\quad\mbox{and}\quad \begin{array}{ccccccccc}
				\ell : & X& \xymatrix@=2pc{\ar[r]&}& \mathrm{Cone}(v)[-1]\\
				& x&\xymatrix@=2pc{\ar@{|->}[r]&}& (0,u(x))
			\end{array}
		\end{equation*}
		are quasi-isomorphisms.
	\end{pro}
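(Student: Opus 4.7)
The plan is to prove both quasi-isomorphism claims by exhibiting each map as fitting into a short exact sequence of complexes whose third term is contractible (acyclic), and then reading off the isomorphism on cohomology from the associated long exact sequence.

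\textbf{Step 1 (routine chain-map verification).} First I would check that $q$ and $\ell$ are indeed morphisms of complexes. For $q$, applying $d_{\mathrm{Cone}(u)}$ to $(y,x)$ and then $v$, one uses $v\circ u=0$ and $v\circ d_{Y}=d_{Z}\circ v$ to obtain $d_{Z}\circ q=q\circ d_{\mathrm{Cone}(u)}$. For $\ell$, remembering that $d_{\mathrm{Cone}(v)[-1]}=-d_{\mathrm{Cone}(v)}$ by the paper's shift convention, the same identity $v\circ u=0$ together with $d_{Y}\circ u=u\circ d_{X}$ gives $d_{\mathrm{Cone}(v)[-1]}\circ \ell=\ell\circ d_{X}$.

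\textbf{Step 2 (the map $q$).} Since $v$ is surjective, $q$ is surjective, and because $u$ is injective one has $\ker(v)=u(X)$. Therefore $\ker(q)^{i}=u(X^{i})\oplus X^{i+1}$, and the differential induced by $d_{\mathrm{Cone}(u)}$ pulls back through the isomorphism $u\colon X\to u(X)$ to the differential $(x',x)\mapsto(d_{X}x'+x,-d_{X}x)$. This is exactly $\mathrm{Cone}(\mathrm{id}_{X})$, which is acyclic (the standard contraction). The short exact sequence of complexes
\begin{equation*}
\xymatrix@=2pc{0\ar[r]& \ker(q)\ar[r]& \mathrm{Cone}(u)\ar[r]^-{q}& Z\ar[r]& 0}
\end{equation*}
then forces $q$ to be a quasi-isomorphism via the associated long exact sequence in cohomology.

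\textbf{Step 3 (the map $\ell$).} Here I would instead look at the cokernel. Since $u$ is injective, $\ell$ is injective, and $\mathrm{coker}(\ell)^{i}=Z^{i-1}\oplus\bigl(Y^{i}/u(X^{i})\bigr)\cong Z^{i-1}\oplus Z^{i}$ via $v$. Transporting the differential $(z,y)\mapsto(-d_{Z}z-v(y),d_{Y}y)$ of $\mathrm{Cone}(v)[-1]$ through this identification produces exactly the differential of $\mathrm{Cone}(\mathrm{id}_{Z})[-1]$, again acyclic. The short exact sequence
\begin{equation*}
\xymatrix@=2pc{0\ar[r]& X\ar[r]^-{\ell}& \mathrm{Cone}(v)[-1]\ar[r]& \mathrm{coker}(\ell)\ar[r]&0}
\end{equation*}
then yields the quasi-isomorphism as before.

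\textbf{Main obstacle.} The only delicate point is keeping the signs straight, in particular for $\ell$: the $[-1]$ shift flips the sign of every differential, and one has to check that the identification of $\mathrm{coker}(\ell)$ with a shifted cone of $\mathrm{id}_{Z}$ respects these signs. Once this bookkeeping is done correctly, the acyclicity of $\mathrm{Cone}(\mathrm{id}_{X})$ and $\mathrm{Cone}(\mathrm{id}_{Z})[-1]$ is standard, and the rest is a direct application of the long exact sequence in cohomology.
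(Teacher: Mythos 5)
Your proposal is correct and follows essentially the same route as the paper: the paper exhibits the two short exact sequences $0\to \mathrm{Cone}(\mathrm{id}_X)\to \mathrm{Cone}(u)\xrightarrow{q} Z\to 0$ and $0\to X\xrightarrow{\ell}\mathrm{Cone}(v)[-1]\to \mathrm{Cone}(\mathrm{id}_{Z[-1]})\to 0$, which are exactly your identifications of $\ker(q)$ and $\mathrm{coker}(\ell)$ with cones of identity maps, and then concludes from their acyclicity via the long exact sequence. Your extra sign and chain-map verifications are consistent with the paper's conventions and simply make explicit what the paper leaves to the reader.
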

	\begin{proof}
		Consider the short exact sequences
		\begin{equation*}
			\xymatrix@=3pc{0\ar[r]& \mathrm{Cone}(\mathrm{id}_{X})\ar[r]^-{(u,\mathrm{id}_{X[1]})}& \mathrm{Cone}(u)\ar[r]^-{q}& Z\ar[r]&0}
		\end{equation*}
		and
		\begin{equation*}
			\xymatrix@=3pc{0\ar[r]& X\ar[r]^-{\ell}& \mathrm{Cone}(v)[-1]\ar[r]^-{(\mathrm{id}_{Z[-1]},v)}& \mathrm{Cone}(\mathrm{id}_{Z[-1]})\ar[r]&0.}
		\end{equation*}
		Since  $\mathrm{Cone}(\mathrm{id}_{A})$ is  acyclic for any complex $A,$  the long exact sequences of cohomology imply that $q$ and $\ell$ are quasi-isomorphisms.
	\end{proof}
	Now, given a commutative cubic diagram of complexes:
	\begin{equation*}
		\xymatrix@=1pc{ C\ar[rrrr]^-{f}\ar[dr]^-{id}\ar[ddddd]^-{id}&  &  &  & X\ar[dr]^-{u}\ar[ddddd]^-{v}\\
			& C \ar[rrrr]^-{g}\ar[ddddd]^-{id}&  &  &  & Y\ar[ddddd]^-{\hat{v}}\\
			&  &   &   &\\
			&  &  &   &  \\
			&  &  &  &   \\
			C\ar[rrrr]^-{f_{1}}\ar[dr]^-{id}&  &  &  &  X_{1}\ar[dr]^-{u_{1}}\\
			&  C\ar[rrrr]^-{g_{1}}&  &  &  &  Y_{1}}
	\end{equation*}
	we have the following proposition.
	\begin{pro}\label{pro cub}
		The above commutative cubic diagram gives the following commutative diagram
		\begin{equation*}
			\xymatrix@=3pc{\mathrm{Cone}(f)\ar[r]^-{(u,id_{C}[1])}\ar[d]^-{(v,id_{C}[1])}& \mathrm{Cone}(g)\ar[r]^-{(id_{Y},f[1])}\ar[d]^-{(\hat{v},id_{C}[1])}& \mathrm{Cone}(u)\ar[d]^-{(\hat{v},v[1])}\ar[r]& \mathrm{Cone}(f)[1]\ar[d]\\
				\mathrm{Cone}(f_{1})\ar[r]^-{(u_{1},id_{C}[1])}\ar[d]^-{(id_{X_{1}},f[1])}   & \mathrm{Cone}(g_{1})\ar[r]^-{(id_{Y_{1}},f_{1}[1])}\ar[d]^-{(id_{Y_{1}},g[1])}    &  \mathrm{Cone}(u_{1})\ar[r]  & \mathrm{Cone}(f_{1})[1]\\
				\mathrm{Cone}(v)\ar[r]^-{(u_{1},u[1])}\ar[d]& \mathrm{Cone}(\hat{v})\ar[d]&   &  \\
				\mathrm{Cone}(f)[1]\ar[r]& \mathrm{Cone}(g)[1]&    &    }
		\end{equation*}
		where the two first rows and columns are distinguished triangles.
	\end{pro}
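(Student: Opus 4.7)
The plan is to assemble the displayed diagram row-by-row and column-by-column out of the distinguished triangles produced by Proposition \ref{Pro cone}(1), then verify commutativity of each square by a direct calculation in the explicit direct-sum model $\co(\cdot)=(\cdot)\oplus(\cdot)[1]$.

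For the two rows, I would apply Proposition \ref{Pro cone}(1) to the factorisations $g=u\circ f$ and $g_{1}=u_{1}\circ f_{1}$ (these are two faces of the cube). The first gives the top row
\[\co(f)\xrightarrow{(u,\mathrm{id}_{C}[1])}\co(g)\xrightarrow{(\mathrm{id}_{Y},f[1])}\co(u)\to\co(f)[1],\]
and the second gives the analogous triangle in the second row. For the first two columns, I would likewise apply Proposition \ref{Pro cone}(1) to the factorisations $f_{1}=v\circ f$ and $g_{1}=\hat{v}\circ g$, which are the remaining two (top/bottom) faces: this yields the first and second columns as distinguished triangles with the maps shown. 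The arrow $(u_{1},u[1]):\co(v)\to\co(\hat v)$ in the third row is the morphism of cones attached to the commutative square $\hat v\circ u=u_{1}\circ v$ (the back face of the cube), as recalled in Section \ref{sectiontwo}; the arrows $\co(f)[1]\to\co(g)[1]$ and $\co(f)[1]\to\co(f_{1})[1]$ are just shifts of $(u,\mathrm{id}_{C}[1])$ and $(v,\mathrm{id}_{C}[1])$.

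For commutativity of the squares, I would unpack both paths on a general degree-$i$ element of the source. For instance, in the top-left square an element $(x,c)\in X^{i}\oplus C^{i+1}$ travels right-then-down to $(\hat v u(x),c)$ and down-then-right to $(u_{1}v(x),c)$, and equality is the relation $\hat v\circ u=u_{1}\circ v$. The top-middle square reduces to $f_{1}=v\circ f$; the upper-right square and the lower-left square reduce to the same relation together with the compatibility of shifts. The middle-left square $\co(g)\to\co(g_{1})\to\co(\hat v)$ versus $\co(g)\to\co(u)\to\co(u_{1})$ is handled using both $\hat v\circ u=u_{1}\circ v$ and $f_{1}=v\circ f$. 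Each remaining square reduces to one of the five compatibilities $g=u\circ f$, $g_{1}=u_{1}\circ f_{1}$, $f_{1}=v\circ f$, $g_{1}=\hat v\circ g$ or $\hat v\circ u=u_{1}\circ v$, which are exactly the faces of the given commutative cube.

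The only real difficulty is bookkeeping. One must keep track of (i) the sign conventions for $d^{i}_{X[n]}=(-1)^{n}d^{i+n}_{X}$ when composing shifts, and (ii) the correct source and target for each morphism of cones of the form $(a,b[1])$, so that the component matrices agree. Once the maps are written out in the direct-sum model there is no homotopy to construct: every square commutes on the nose, which is why the statement is an equality of morphisms of complexes rather than a statement in the derived category.
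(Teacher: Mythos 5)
Your proposal is correct and follows the same route as the paper: the paper's proof simply cites Proposition \ref{Pro cone}(1) applied to the composable pairs coming from the faces of the cube to get the two first rows and columns as distinguished triangles, and notes that commutativity follows from the commutativity of the cube. Your explicit element-wise verification of the squares (each reducing to one of the five face relations) just spells out what the paper leaves as ``obvious.''
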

	\begin{proof}
		Obviously, the commutativity is deduced from the commutative cubic diagram. Further, Proposition \ref{Pro cone} ($1$) shows that the two first rows and columns are distinguished triangles.  	
	\end{proof}
	
	\section{Poitou-Tate duality for totally positive Galois cohomology}\label{Totally positive Galois cohomology}
	For a field $K,$ let $K^{sep}$ denote a fixed separable closure of $K$ and $G_{K}=\mathrm{Gal}(K^{sep}/K).$ Let $F$ be a number field and let $S$ be a finite set of primes of $F$ containing the set $S_{2}$ of dyadic primes  and the set $S_{\infty}$ of archimedean primes. For a place $v$ of $F,$ we denote by $F_{v}$ the completion of $F$ at $v.$ Notice that for any \textbf{infinite} place $v$ of $F,$ a fixed extension $F^{sep}\hookrightarrow F_{v}^{sep}$ of the embedding  $F\hookrightarrow F_{v}$  defines a continuous homomorphism
	$\xymatrix@=1pc{G_{F_{v}}\ar@{^{(}->}[r]& G_{F,S},}$ where $G_{F,S}$ is the Galois group of the maximal algebraic extension $F_{S}$ of $F$ which is unramified outside $S.$
	\vskip 6pt
	For a discrete or a compact  $\Z_{2}[[\mathrm{G_{F,S}}]]$-module $M,$ we write $M_{+}$
	for the cokernel of the map
	\begin{equation*}
		\xymatrix@=2pc{ M\ar[r]&
			\displaystyle{\bigoplus_{v\mid\infty}\mathrm{Ind}^{G_{F,S}}_{G_{F_{v}}}}M,}
	\end{equation*}
	where
	$\mathrm{Ind}^{G_{F,S}}_{G_{F_{v}}}M$ denotes the induced module. Thus, we have an exact sequence
	\begin{equation}\label{defmpositive}
		\xymatrix@=2pc{0\ar[r]&M\ar[r]&\displaystyle{\bigoplus_{v\mid
					\infty}\mathrm{Ind}^{G_{F,S}}_{G_{F_{v}}}}M\ar[r]&M_{+}\ar[r]&0.}
	\end{equation}
	Following \cite[\S 5]{Ka93} and \cite[Section 2]{CKPS}, we define the $n$-$\mathrm{th}$ totally positive Galois cohomology group $H^{n}_{+}(G_{F,S},M)$ of $M$ by
	\begin{equation}\label{deft positive}
		H^{n}_{+}(G_{F,S},M):=H^{n-1}(G_{F,S},M_{+}),\; n\in\Z.
	\end{equation}
	
	Let us give an equivalent definition of the totally positive
	Galois cohomology in terms of the mapping cone.\\
	By \cite[Proposition 3.4.2, p. 83]{Nekovar06},  the exact sequence (\ref{defmpositive})
	gives rise to the exact  sequence of complexes of $\Z_{2}$-modules
	\begin{equation*}
		\xymatrix@=2pc{0\ar[r]& C^{\bullet}_{cont}(G_{F,S},M)\ar[r]^-{i^{\bullet}}& \bigoplus_{v\mid\infty} C^{\bullet}_{cont}(G_{F,S},\mathrm{Ind}^{G_{F,S}}_{G_{F_{v}}}M)\ar[r]& C^{\bullet}_{cont}(G_{F,S},M_{+})\ar[r]&0}
	\end{equation*}
	where $C^{\bullet}_{cont}(G_{F,S},.)$ denotes  the complex of continuous cochains \cite[Definition 3.4.1.1]{Nekovar06}.\\
	On the one hand,  Proposition \ref{pro coker and cone} shows that the complex $\mathrm{Cone}(i^{\bullet})$  is quasi-isomorphic to the complex
	$C^{\bullet}_{cont}(G_{F,S},M_{+}).$
	On the other hand, for any infinite prime $v,$ Shapiro's lemma applied to $M$ gives a quasi-isomorphism
	\begin{equation*}
		\xymatrix@=2pc{Sh_{v}: C^{\bullet}_{cont}(G_{F,S},\mathrm{Ind}^{G_{F,S}}_{G_{F_{v}}}M)\ar[r]& C^{\bullet}_{cont}(G_{F_{v}},M).}
	\end{equation*}
	It follows that
	\begin{equation*}
		\xymatrix@=2pc{Sh_{\infty}:=\bigoplus_{v\mid \infty} Sh_{v}: \bigoplus_{v\mid \infty} C^{\bullet}_{cont}(G_{F,S},\mathrm{Ind}^{G_{F,S}}_{G_{F_{v}}}M)\ar[r]& \bigoplus_{v\mid \infty} C^{\bullet}_{cont}(G_{F_{v}},M)}
	\end{equation*}
	is a quasi-isomorphism and that
	$\mathrm{Cone}(Sh_{\infty})$ is acyclic.\\
	Further, by Proposition \ref{Pro cone} ($1$), there is a distinguished triangle
	\begin{equation*}
		\xymatrix@=2pc{\mathrm{Cone}(Sh_{\infty})[-1]\ar[r]& \mathrm{Cone}(i^{\bullet})\ar[r]&
			\mathrm{Cone}(Sh_{\infty}\circ i^{\bullet})\ar[r]& \mathrm{Cone}(Sh_{\infty}).}
	\end{equation*}
	Writing the long exact sequence of cohomology, we obtain
	that  $\mathrm{Cone}(Sh_{\infty}\circ i^{\bullet})$ is quasi-isomorphic to $\mathrm{Cone}(i^{\bullet}).$ We deduce that the complexes  $\mathrm{Cone}(Sh_{\infty}\circ i^{\bullet})$ and $C^{\bullet}_{cont}(G_{F,S},M_{+})$  are quasi-isomorphic. Then, for all $n\geq 0$ 	
	\begin{equation*}
		H^{n}(C^{\bullet}_{cont}(G_{F,S},M_{+}))=H^{n}(\mathrm{Cone}(Sh_{\infty}\circ i^{\bullet})).
	\end{equation*}	
	Using  (\ref{deft positive}), we get the following description of the totally positive Galois cohomology in terms of the mapping cone.
	\begin{lem}\label{defposigal}
		For any integer $n\geq 0,$ we have
		\begin{equation*}
			H^{n}_{+}(G_{F,S},M)=H^{n-1}(\mathrm{Cone}(Sh_{\infty}\circ i^{\bullet}))\cong
			H^{n}(\mathrm{Cone}(Sh_{\infty}\circ i^{\bullet})[-1]).
		\end{equation*}
	\end{lem}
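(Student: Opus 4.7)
The plan is simply to chain together the quasi-isomorphisms that the paragraphs immediately preceding the lemma have already assembled, so the "proof" is really a bookkeeping statement that collects those steps and reads off the cohomology in the correct degree. I would proceed in four short stages, unwinding the definition from the outside in.

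First, I would start from the definition \eqref{deft positive}, writing $H^n_+(G_{F,S},M) = H^{n-1}(G_{F,S},M_+)$ as $H^{n-1}$ of the continuous cochain complex $C^\bullet_{cont}(G_{F,S},M_+)$. Second, I would invoke Proposition \ref{pro coker and cone} applied to the short exact sequence of cochain complexes coming from \eqref{defmpositive} (which is indeed short exact by Nekov\'a\v{r}'s Proposition~3.4.2 already cited) to identify $C^\bullet_{cont}(G_{F,S},M_+)$ with $\mathrm{Cone}(i^\bullet)$ up to quasi-isomorphism, via the map $q$ of that proposition.

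Third, to pass from $\mathrm{Cone}(i^\bullet)$ to $\mathrm{Cone}(Sh_\infty\circ i^\bullet)$, I would apply Proposition \ref{Pro cone}~(1) to the composable pair $(i^\bullet, Sh_\infty)$. This yields a distinguished triangle
\begin{equation*}
\xymatrix@=2pc{\mathrm{Cone}(Sh_\infty)[-1]\ar[r]& \mathrm{Cone}(i^\bullet)\ar[r]^-{(Sh_\infty, \mathrm{id})}& \mathrm{Cone}(Sh_\infty\circ i^\bullet)\ar[r]& \mathrm{Cone}(Sh_\infty).}
\end{equation*}
Shapiro's lemma place-by-place makes $Sh_\infty$ a quasi-isomorphism, so $\mathrm{Cone}(Sh_\infty)$ is acyclic, and the long exact sequence in cohomology associated to the distinguished triangle collapses to give that the middle arrow is a quasi-isomorphism. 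Composing the two quasi-isomorphisms obtained so far, we conclude
\begin{equation*}
H^{n-1}(G_{F,S},M_+) \;\cong\; H^{n-1}\bigl(\mathrm{Cone}(Sh_\infty\circ i^\bullet)\bigr),
\end{equation*}
which is the first stated equality. The second isomorphism $H^{n-1}(C)\cong H^n(C[-1])$ is then immediate from the shift convention $C[-1]^i = C^{i-1}$ fixed at the beginning of Section \ref{sectiontwo}.

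I do not expect any real obstacle: the geometric content has already been laid down before the statement, and what remains is organizational. The only point that could be a source of slips is keeping the cohomological indices and the sign convention for the differential of $\mathrm{Cone}(\cdot)$ consistent across the three shifts (in the definition of $H^n_+$, in the distinguished triangle from Proposition \ref{Pro cone}~(1), and in the final reformulation with $[-1]$); careful tracking of degrees shows that the $-1$ shift in \eqref{deft positive} is exactly absorbed by the shift that converts $H^{n-1}(\mathrm{Cone}(Sh_\infty\circ i^\bullet))$ into $H^n(\mathrm{Cone}(Sh_\infty\circ i^\bullet)[-1])$.
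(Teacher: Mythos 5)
Your proposal is correct and follows essentially the same route as the paper: the preceding paragraphs already identify $C^{\bullet}_{cont}(G_{F,S},M_{+})$ with $\mathrm{Cone}(i^{\bullet})$ via Proposition \ref{pro coker and cone}, then pass to $\mathrm{Cone}(Sh_{\infty}\circ i^{\bullet})$ using Proposition \ref{Pro cone}~(1) together with the acyclicity of $\mathrm{Cone}(Sh_{\infty})$ from Shapiro's lemma, and conclude by the definition \eqref{deft positive} and the shift convention. Your degree bookkeeping, including $H^{n-1}(C)\cong H^{n}(C[-1])$ with $(X[n])^{i}=X^{i+n}$, matches the paper's argument.
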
	
	In the next proposition, we prove an analogue of the Poitou-Tate exact sequence for the prime $p=2$ involving the totally positive Galois cohomology groups $H^{i}_{+}(.),$ $i=1, 2.$ The proof uses  the methods of \cite{Nekovar06}.\\
	For each prime $v\in S,$ fix  an embedding $F^{sep}\hookrightarrow F_{v}^{sep}$ extending $F\hookrightarrow F_{v}.$ This defines a continuous homomorphism	
	$ \xymatrix@=1pc{G_{F_{v}}\ar@{^{(}->}[r]&G_{F}\ar@{->>}[r]& G_{F,S},}$ hence a 'restriction' map (\cite[p.113]{Nekovar06})
	\begin{equation*}\label{restriction map}
		\xymatrix@=2pc{	res_{v}:\; C^{\bullet}_{cont}(G_{F,S},M)\ar[r]& C^{\bullet}_{cont}(G_{F_{v}},M).}
	\end{equation*}	
	Let $res_{S}:=\bigoplus_{v\in S}res_{v}$ and $\pi_{\infty}$ be the projection map $$\xymatrix@=1.5pc{	\bigoplus_{v\in S}C^{\bullet}_{cont}(G_{F_{v}},M)\ar[r]^-{\pi_{\infty}}& \bigoplus_{v\mid \infty}C^{\bullet}_{cont}(G_{F_{v}},M).}$$
	By Proposition \ref{Pro cone} ($1$),
	the composite morphism of complexes
	\begin{equation*}
		\xymatrix@=3pc{C^{\bullet}_{cont}(G_{F,S},M)\ar[r]^-{res_{S}}&\bigoplus_{v\in S}C^{\bullet}_{cont}(G_{F_{v}},M)\ar[r]^-{\pi_{\infty}}& \bigoplus_{v\mid \infty}C^{\bullet}_{cont}(G_{F_{v}},M),}
	\end{equation*}	
	induces a distinguished triangle	
	\begin{equation*}
		\xymatrix@=2pc{\mathrm{Cone}(\pi_{\infty})[-1]\ar[r]& \mathrm{Cone}(res_{S})\ar[r]& \mathrm{Cone}(\pi_{\infty}\circ res_{S})\ar[r]& \mathrm{Cone}(\pi_{\infty}),}
	\end{equation*}
	This implies that there is a  long exact sequence of cohomology groups
	\begin{equation}\label{longcone}
		\xymatrix@=0.5pc{\cdots\ar[r]& H^{n}(\mathrm{Cone}(res_{S}))\ar[r]& H^{n}(\mathrm{Cone}(\pi_{\infty}\circ res_{S}))\ar[r]& H^{n}(\mathrm{Cone}(\pi_{\infty}))\ar[r]& H^{n+1}(\mathrm{Cone}(res_{S}))\ar[r]&\cdots}
	\end{equation}	
	Let us compute  the cohomology groups involved in this long exact sequence in terms of  $G_{F,S}$-cohomology and $G_{F_{v}}$-cohomology.\\
	Firstly, remark that for an infinite place $v,$ the morphism $res_{v}$ is the composite
	\begin{equation*}
		\xymatrix@=2pc{ C^{\bullet}_{cont}(G_{F,S},M)\ar[r]&  C^{\bullet}_{cont}(G_{F,S},\mathrm{Ind}^{G_{F,S}}_{G_{F_{v}}}M)\ar[r]^-{Sh_{v}}& C^{\bullet}_{cont}(G_{F_{v}},M).}
	\end{equation*}
	Then the composite
	\begin{equation*}
		\xymatrix@=2pc{ C^{\bullet}_{cont}(G_{F,S},M)\ar[r]^-{i^{\bullet}}&  \bigoplus_{v\mid\infty} C^{\bullet}_{cont}(G_{F,S},\mathrm{Ind}^{G_{F,S}}_{G_{F_{v}}}M)\ar[r]^-{Sh_{\infty}}&\bigoplus_{v\mid\infty} C^{\bullet}_{cont}(G_{F_{v}},M)}
	\end{equation*}
	is exactly the morphism $\pi_{\infty}\circ res_{S}.$ Hence, by Lemma \ref{defposigal} we get
	\begin{equation}\label{description totally positive cone}
		H^{n+1}_{+}(G_{F,S},M) \cong H^{n}(\mathrm{Cone}(\pi_{\infty}\circ res_{S}))
	\end{equation}
	for all $n\in\Z.$\\
	Secondly, by Proposition \ref{pro coker and cone}, the exact sequence
	\begin{equation*}
		\xymatrix@=1.5pc{0\ar[r]& \bigoplus_{v\in S_{f}}C^{\bullet}_{cont}(G_{F_{v}},M)\ar[r]& \bigoplus_{v\in S} C^{\bullet}_{cont}(G_{F_{v}},M)\ar[r]^-{\pi_{\infty}}&
			\bigoplus_{v\mid \infty}C^{\bullet}_{cont}(G_{F_{v}},M)\ar[r]&0}
	\end{equation*}
	gives rise to a  quasi-isomorphism between $\bigoplus_{v\in S_{f}}C^{\bullet}_{cont}(G_{F_{v}},M)$ and $\mathrm{Cone}(\pi_{\infty})[-1].$	Hence, for all $n\in\Z,$
	\begin{equation}\label{cone pi}
		H^{n}(\mathrm{Cone}(\pi_{\infty}))\cong H^{n+1}(\bigoplus_{v\in S_{f}}C^{\bullet}_{cont}(G_{F_{v}},M))\cong \bigoplus_{v\in S_{f}}H^{n+1}(F_{v},M).
	\end{equation}
	Finally, consider the map
	\begin{equation*}
		\xymatrix@=2pc{	\widehat{res}_{S}:\; C^{\bullet}_{cont}(G_{F,S},M)\ar[r]& \bigoplus_{v\in S_{f}}C^{\bullet}_{cont}(G_{F_{v}},M)\oplus \bigoplus_{v\mid \infty}\hat{C}^{\bullet}_{cont}(G_{F_{v}},M),}
	\end{equation*}
	where  $\hat{C}^{\bullet}_{cont}(G_{F_{v}},M)$  is the complete Tate cochain complex,	$\widehat{res}_{S}=\oplus_{v\in S_{f}} res_{v}\bigoplus \oplus_{v\mid\infty}\widehat{res}_{v}$ and for $v\mid\infty$  the map $\widehat{res}_{v}$  is given by
	\begin{equation*}
		\xymatrix@=2pc{	\widehat{res}_{v}:\; C^{\bullet}_{cont}(G_{F,S},M)\ar[r]^-{res_{v}}& C^{\bullet}_{cont}(G_{F_{v}},M)\ar@{^{(}->}[r]^-{\tau_{v}}& \hat{C}^{\bullet}_{cont}(G_{F_{v}},M),}
	\end{equation*}
	$\tau_{v}$ being the canonical injection. Recall that the continuous cohomology group  with compact support $\hat{H}^{n}_{c,cont}(G_{F,S},M)$ is defined by
	\begin{equation*}
		\hat{H}^{n}_{c,cont}(G_{F,S},M):=H^{n}(\mathrm{Cone}(\widehat{res}_{S})[-1])
	\end{equation*}
	cf.\,\cite[ch.5, p.132]{Nekovar06}.
	
	In the sequel, we adopt the following usual notations:
	\begin{itemize}
		\item [$(.)^{\ast}:$] the Kummer dual $\mathrm{Hom}(.,\mu_{2^{\infty}}),$ where $\mu_{2^{\infty}}$ is the group of all roots of unity of $2$-power order;
		\item [$(.)^{\vee}:$] the Pontryagin dual $\mathrm{Hom}(.,\Q_{2}/\Z_{2}).$
	\end{itemize}
	Notice that:
	\begin{enumerate}[label=(\roman*)]
		\item For all $n\geq 1,$    $H^{n}(\mathrm{Cone}(res_{S}))=H^{n}(\mathrm{Cone}(\widehat{res}_{S}))=\hat{H}^{n+1}_{c,cont}(G_{F,S},M).$ In particular
		$H^{n}(\mathrm{Cone}(res_{S}))\cong H^{2-n}(G_{F,S},M^{\ast})^{\vee}$ as a consequence of \cite[Proposition 5.7.4.]{Nekovar06}.
		\item $	H^{0}(\mathrm{Cone}(\widehat{res}_{S}))= \hat{H}^{1}_{c,cont}(G_{F,S},M)\cong H^{2}(G_{F,S},M^{\ast})^{\vee}$ (\cite[Proposition 5.7.4.]{Nekovar06}.
	\end{enumerate}
	\vskip 6pt
	From \eqref{longcone} and the above results, we deduce the following proposition:
	\begin{pro}\label{exact sequence of finie places} Let $M$ be a finite $\Z_{2}[[G_{F,S}]]$-module and let $S_{f}$ denote the set of finite places in $S.$ Then there is a
		long exact sequence
		\begin{equation*}
			\begin{tikzpicture}[descr/.style={fill=white,inner sep=2pt}]
				\matrix (m) [
				matrix of math nodes,
				row sep=1.4em,
				column sep=1.1em,
				text height=1.5ex, text depth=0.0ex
				]
				{0&\bigoplus_{v\in S_{f}}H^{0}(F_{v},M)&  H^{0}(\co(res_{S})) & H^{1}_{+}(G_{F,S},M) & \bigoplus_{v\in S_{f}}H^{1}(F_{v},M) & \\
					& H^{1}(G_{F,S},M^{\ast})^{\vee}& H^{2}_{+}(G_{F,S},M) &  \bigoplus_{v\in
						S_{f}}H^{2}(F_{v},M) & H^{0}(G_{F,S},M^{\ast})^{\vee}& 0. \\
				};
				
				\path[overlay,->, font=\scriptsize,>=latex]
				(m-1-1) edge (m-1-2)
				(m-1-2) edge (m-1-3)
				(m-1-3) edge (m-1-4)
				(m-1-4) edge (m-1-5)
				(m-1-5) edge[out=355,in=175,black] node[yshift=0.3ex] {} (m-2-2)
				(m-2-2) edge (m-2-3)
				(m-2-3) edge (m-2-4)
				(m-2-4) edge (m-2-5)
				(m-2-5) edge (m-2-6)
				;
			\end{tikzpicture}
		\end{equation*}	
	\end{pro}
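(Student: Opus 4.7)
The strategy is to apply the long exact cohomology sequence \eqref{longcone} attached to the distinguished triangle relating $\co(res_S)$, $\co(\pi_\infty\circ res_S)$ and $\co(\pi_\infty)$, and identify each of its terms in turn using the three isomorphisms already established earlier in the section:
\begin{equation*}
H^n(\co(\pi_\infty\circ res_S))\cong H^{n+1}_+(G_{F,S},M),\qquad H^n(\co(\pi_\infty))\cong \bigoplus_{v\in S_f}H^{n+1}(F_v,M)
\end{equation*}
from \eqref{description totally positive cone} and \eqref{cone pi}, together with $H^n(\co(res_S))\cong H^{2-n}(G_{F,S},M^{\ast})^\vee$ for $n\geq 1$ coming from notice~(i). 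Substituting these into \eqref{longcone} for $n\in\{-1,0,1,2\}$ immediately produces the middle portion of the claimed twelve-term sequence essentially verbatim, with $H^0(\co(res_S))$ left as-is (as in the statement).

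For the left boundary, I would observe that the term preceding $\bigoplus_{v\in S_f}H^0(F_v,M)\cong H^{-1}(\co(\pi_\infty))$ in \eqref{longcone} is $H^{-1}(\co(\pi_\infty\circ res_S))\cong H^0_+(G_{F,S},M)=H^{-1}(G_{F,S},M_+)=0$, since continuous cochain cohomology vanishes in negative degrees. This forces the arrow $\bigoplus_{v\in S_f}H^0(F_v,M)\to H^0(\co(res_S))$ to be injective, producing the initial $0$.

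For the right boundary, the LES continues as
\begin{equation*}
\bigoplus_{v\in S_f}H^2(F_v,M)\longrightarrow H^0(G_{F,S},M^{\ast})^\vee\longrightarrow H^3_+(G_{F,S},M)\longrightarrow \bigoplus_{v\in S_f}H^3(F_v,M),
\end{equation*}
and since $\mathrm{cd}_2(F_v)=2$ for finite $v$ gives $H^3(F_v,M)=0$, surjectivity reduces to showing $H^3_+(G_{F,S},M)=0$. Unwinding \eqref{defmpositive} in cohomology and using Tate's theorem that $H^n(G_{F,S},M)\to\bigoplus_{v\text{ real}}H^n(F_v,M)$ is an isomorphism for $n\geq 3$, one obtains $H^3_+(G_{F,S},M)=\mathrm{coker}\bigl(H^2(G_{F,S},M)\to\bigoplus_{v\mid\infty}H^2(F_v,M)\bigr)$. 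The surjectivity of this map can then be extracted from the classical nine-term Poitou--Tate sequence at $p=2$: given $(y_v)_{v\mid\infty}$, one may adjust by suitable finite-place components $(x_v)_{v\in S_f}\in\bigoplus_{v\in S_f}H^2(F_v,M)$ so that the total local class annihilates every $\alpha\in(M^{\ast})^{G_{F,S}}$ under the global pairing, which is possible because the localization $(M^{\ast})^{G_{F,S}}\to\prod_{v\in S_f}(M^{\ast})^{G_{F_v}}$ is injective (hence its finite dual is surjective).

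The main obstacle is this last step at the right boundary: reconciling the totally positive cohomology $H^3_+$ with classical Poitou--Tate duality at $p=2$ so as to establish its vanishing via the contributions of both finite and real infinite places. Once this is in place, the rest of the proof is a direct term-by-term bookkeeping through \eqref{longcone}.
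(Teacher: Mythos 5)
Your proposal is correct and follows essentially the same route as the paper: the paper likewise obtains the sequence by substituting the identifications \eqref{description totally positive cone}, \eqref{cone pi} and $H^{n}(\co(res_{S}))\cong H^{2-n}(G_{F,S},M^{\ast})^{\vee}$ for $n\geq 1$ into the long exact sequence \eqref{longcone}. The only difference is that the paper leaves the two boundary zeros implicit, while you justify them explicitly and correctly (on the left via $H^{0}_{+}(G_{F,S},M)=H^{-1}(G_{F,S},M_{+})=0$, on the right via the vanishing of $H^{3}_{+}(G_{F,S},M)$, i.e.\ the surjectivity of $H^{2}(G_{F,S},M)\to\bigoplus_{v\mid\infty}H^{2}(F_{v},M)$ deduced from Tate's theorem and the nine-term Poitou--Tate sequence, using finiteness of $M$).
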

	\begin{rem} Proposition \ref{exact sequence of finie places} above is a slight generalization of \cite[Proposition 2.6]{Mazigh}. However, a certain argument concerning the continuous cohomology with compact support in \cite[\S 2.2, (4) p.6 ]{Mazigh} turns out to be incorrect. In \cite[\S 2.2, (4) p.6 ]{Mazigh} the  author claimed that 	$H^{n}(G_{F,S},M_{S})=\hat{H}^{n+1}_{c,cont}(G_{F,S},M),$ where $M_{S}$ is  the  cokernel of the canonical map
		$$\xymatrix@=2pc{M\ar[r]& \bigoplus_{v\in S} \mathrm{Ind}^{G_{F}}_{G_{F_{v}}}M.}$$
	This is not always true but  the results of \cite{Mazigh} remain unchanged using Proposition \ref{exact sequence of finie places} above.
	\end{rem}
	
	Let us recall  the local duality Theorem (e.g.\,\cite[Corollary
	2.3, p.34]{Milne}). For $n=0,1,2$ and for every place $v$ of $F,$ the cup products
	\begin{equation}\label{local duality}
		\begin{array}{cccccc}
			H^{n}(F_{v},M) & \times& H^{2-n}(F_{v},M^{\ast}) & \xymatrix@=1.5pc{
				\ar[r]&} &
			H^{2}(F_{v},\mu_{2^{\infty}})\simeq \Q_{2}/\Z_{2},&
			\mbox{if $v$ is finite}\\
			&   &    &   &    \\
			\widehat{H}^{n}(F_{v},M) & \times& \widehat{H}^{2-n}(F_{v},M^{\ast}) & \xymatrix@=1.5pc{ \ar[r]&} &
			H^{2}(F_{v},\mu_{2^{\infty}}),&
			\mbox{if $v$ is infinite}
		\end{array}
	\end{equation}
	are perfect pairings, where $\widehat{H}^{n}(F_{v},.)$
	is the Tate cohomology groups.\\
	For a discrete or a compact  $\Z_{2}[[\mathrm{G_{F,S}}]]$-module $M$ and $n=1,2,$ we define the groups  $\sh_{S_{f}}^{n}(M)$   and
	$\sh_{S_{f}}^{n,+}(M)$ to be the kernels of the localization maps
	\begin{equation*}
		\mbox{$\sh_{S_{f}}^{n}(M)$}:=\xymatrix@=1.5pc{\ker( H^{n}(G_{F,S},M)\ar[r]& \bigoplus_{v\in
				S_{f}}H^{n}(F_{v},M)),}
	\end{equation*}
	and
	\begin{equation*}
		\mbox{$\sh_{S_{f}}^{n,+}(M)$}:=\xymatrix@=1.5pc{\ker(
			H^{n}_{+}(G_{F,S},M)\ar[r]& \bigoplus_{v\in S_{f}}H^{n}(F_{v},M)).}
	\end{equation*}\vskip 6pt
	Here we are interested in the study of the analogue of the pairing \eqref{Poitou-Tate pairing} for the groups $\sh_{S_{f}}^{n ,+}(M)$ with $n=1, 2.$\vskip 6pt
	Let's start with the case $n=2.$ We have the following proposition:
	\begin{pro}
		Let $M$ be a discrete or a compact $\Z_{2}[[G_{F,S}]]$-module.
		There is a perfect pairing
		\begin{equation*}
			\begin{array}{ccccc}
				\sh_{S_{f}}^{2,+}(M) & \times & \sh_{S_{f}}^{1}(M^{\ast}) & \xymatrix@=1.5pc{ \ar[r]&} &
				\Q_{2}/\Z_{2}.
			\end{array}
		\end{equation*}
	\end{pro}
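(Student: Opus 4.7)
The plan is to extract the perfect pairing directly from the long exact sequence of Proposition \ref{exact sequence of finie places}, reducing it to local duality and the exactness of Pontryagin duality.

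The relevant portion of that long exact sequence reads
\begin{equation*}
\xymatrix@=1.5pc{ \bigoplus_{v\in S_{f}}H^{1}(F_{v},M) \ar[r]^-{\varphi} & H^{1}(G_{F,S},M^{\ast})^{\vee} \ar[r] & H^{2}_{+}(G_{F,S},M) \ar[r] & \bigoplus_{v\in S_{f}}H^{2}(F_{v},M), }
\end{equation*}
so by exactness $\sh_{S_{f}}^{2,+}(M)\cong \mathrm{coker}(\varphi)$. The key compatibility to establish is that, under the local duality isomorphisms $H^{1}(F_{v},M)\simeq H^{1}(F_{v},M^{\ast})^{\vee}$ of \eqref{local duality} for $v\in S_{f}$, the map $\varphi$ coincides with the Pontryagin dual $\lambda^{\vee}$ of the localization map
\begin{equation*}
\lambda\colon H^{1}(G_{F,S},M^{\ast})\longrightarrow \bigoplus_{v\in S_{f}} H^{1}(F_{v},M^{\ast}).
\end{equation*}
Granting this, Pontryagin duality is exact on locally compact $\Z_{2}$-modules, so dualizing the short exact sequence $0\to \sh_{S_{f}}^{1}(M^{\ast})\to H^{1}(G_{F,S},M^{\ast})\to \mathrm{Im}(\lambda)\to 0$ gives $\mathrm{coker}(\lambda^{\vee})\cong \sh_{S_{f}}^{1}(M^{\ast})^{\vee}$, whence $\sh_{S_{f}}^{2,+}(M)\cong \sh_{S_{f}}^{1}(M^{\ast})^{\vee}$; this isomorphism is equivalent to a perfect pairing with values in $\Q_{2}/\Z_{2}$.

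The main obstacle is the compatibility $\varphi=\lambda^{\vee}$. By construction, $\varphi$ arises as the connecting homomorphism associated with the distinguished triangle $\mathrm{Cone}(\pi_{\infty})[-1]\to \mathrm{Cone}(res_{S})\to \mathrm{Cone}(\pi_{\infty}\circ res_{S})$, composed with the identifications \eqref{description totally positive cone} and \eqref{cone pi}, and with the Nekov\'{a}\v{r} global duality $H^{n}(\mathrm{Cone}(res_{S}))\cong H^{2-n}(G_{F,S},M^{\ast})^{\vee}$ recalled just before Proposition \ref{exact sequence of finie places}. One must therefore trace $\varphi$ at the cochain level through this chain of quasi-isomorphisms and check that the cup products of local duality at finite places in \eqref{local duality} fit with the cochain-level cup product used to produce the global duality. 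This is a careful but standard diagram chase, entirely parallel to the compatibility underlying the classical Poitou-Tate pairing \eqref{Poitou-Tate pairing}; once it is in place, the perfectness of the pairing follows formally from the reflexivity of Pontryagin duality on discrete $2$-primary or compact pro-$2$ $\Z_{2}$-modules.
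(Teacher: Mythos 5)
Your proposal is correct and follows essentially the same route as the paper: isolate the tail $\bigoplus_{v\in S_{f}}H^{1}(F_{v},M)\to H^{1}(G_{F,S},M^{\ast})^{\vee}\to \sh_{S_{f}}^{2,+}(M)\to 0$ of the long exact sequence of Proposition \ref{exact sequence of finie places}, then dualize using local duality \eqref{local duality} and exactness of Pontryagin duality to identify the result with $\sh_{S_{f}}^{1}(M^{\ast})^{\vee}$. The compatibility of the map in question with the dual of the localization map, which you flag explicitly, is also left implicit in the paper (it is built into Nekov\'a\v{r}'s duality statement used there), so there is no substantive difference in approach.
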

	\begin{proof}
		The exact sequence of Proposition \ref{exact sequence of finie
			places} and the definition of $\sh_{S}^{2,+}(M)$  give  the  exact sequence
		\begin{equation*}
			\xymatrix@=2pc{\bigoplus_{v\in S_{f}}H^{1}(F_{v},M)\ar[r]& H^{1}(G_{F,S},M^{\ast})^{\vee}\ar[r]&
				\sh_{S_{f}}^{2,+}(M)\ar[r]&0.}
		\end{equation*}
		Dualizing  this exact sequence and using the local duality $(\ref{local duality}),$ we get
		\begin{equation*}
			\mbox{$\sh_{S_{f}}^{2,+}(M)^{\vee} \cong  \sh_{S_{f}}^{1}(M^{\ast}).$}
		\end{equation*}
	\end{proof}
	The case $n=1$ is more complicated, it requires more than a simple manipulation of the long exact sequence of Proposition \ref{exact sequence of finie places}. First of all,  let's  start by recalling the morphisms already defined in this section
	\begin{eqnarray*}
		\xymatrix@=1.5pc{ \pi_{\infty}:\,	\bigoplus_{v\in S}C^{\bullet}_{cont}(G_{F_{v}},M)\ar[r] & \bigoplus_{v\mid \infty}C^{\bullet}_{cont}(G_{F_{v}},M),\\
			res_{S}:\, \bigoplus_{v\in S}C^{\bullet}_{cont}(G_{F_{v}},M)\ar[r] & \bigoplus_{v\in S_{f}}C^{\bullet}_{cont}(G_{F_{v}},M) \oplus \bigoplus_{v\mid \infty}C^{\bullet}_{cont}(G_{F_{v}},M) \, and \\
			\widehat{res}_{S}:\; C^{\bullet}_{cont}(G_{F,S},M)\ar[r]& \bigoplus_{v\in S_{f}}C^{\bullet}_{cont}(G_{F_{v}},M)\oplus \bigoplus_{v\mid \infty}\hat{C}^{\bullet}_{cont}(G_{F_{v}},M).}
	\end{eqnarray*}
	In addition, we consider the projection map $\hat{\pi}_{\infty}$: $\xymatrix@=1.5pc{	\bigoplus_{v\in S}C^{\bullet}_{cont}(G_{F_{v}},M)\ar[r]& \bigoplus_{v\mid \infty}\hat{C}^{\bullet}_{cont}(G_{F_{v}},M).}$
	We have the following proposition:
	\begin{pro}
		Let $M$ be a discrete or compact $\Z_{2}[[G_{F,S}]]$-module. We have an exact sequence	
		\begin{equation*}
			\xymatrix@=2pc{0\ar[r]& Z(M) \ar[r]&
				\sh_{S_{f}}^{1,+}(M)\ar[r]& \sh_{S_{f}}^{2}(M^{\ast})^{\vee}\ar[r]&0,}
		\end{equation*}
		where
		\begin{equation*}
			Z(M):=\ker(H^{0}(\mathrm{Cone}(\pi_{\infty}\circ res_{S}))\longrightarrow H^{0}(\mathrm{Cone}(\hat{\pi}_{\infty}\circ\widehat{res}_{S}))).
		\end{equation*}
	\end{pro}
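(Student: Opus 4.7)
The strategy is to construct a natural surjection $\psi\colon H^1_+(G_{F,S}, M) \twoheadrightarrow H^0(\co(\hat\pi_\infty \circ \widehat{res}_S))$ with kernel exactly $Z(M)$, then identify the kernel of the map $H^0(\co(\hat\pi_\infty \circ \widehat{res}_S)) \to \bigoplus_{v\in S_f} H^1(F_v, M)$ with $\sh^2_{S_f}(M^*)^\vee$; the desired short exact sequence then follows by taking preimages under $\psi$.

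For the first step, I would apply Proposition \ref{pro cub} to the cubic diagram whose back face is $C^\bullet_{cont}(G_{F,S}, M) \xrightarrow{res_S} \bigoplus_{v\in S} C^\bullet_{cont}(G_{F_v}, M) \xrightarrow{\pi_\infty} \bigoplus_{v\mid\infty} C^\bullet_{cont}(G_{F_v}, M)$, whose front face is the analogous diagram featuring the Tate completions at the infinite places, and whose vertical maps are the identity on the source, the natural inclusion on the $S$-part and $\tau_\infty$ on the infinite part. The second column yields a distinguished triangle
\begin{equation*}
\co(\pi_\infty \circ res_S) \xrightarrow{\psi} \co(\hat\pi_\infty \circ \widehat{res}_S) \longrightarrow \co(\tau_\infty) \longrightarrow \co(\pi_\infty \circ res_S)[1].
\end{equation*}
By Proposition \ref{pro coker and cone}, $\co(\tau_v)$ is quasi-isomorphic to $\hat C^\bullet_{cont}(G_{F_v}, M)/C^\bullet_{cont}(G_{F_v}, M)$, and a direct computation with the long exact sequence of this quotient shows that its $H^0$ vanishes at each archimedean $v$, since the canonical map $H^0(F_v, M) \to \hat H^0(F_v, M)$ is surjective and $H^1(F_v, M) \to \hat H^1(F_v, M)$ is an isomorphism (trivially at complex $v$, where $\hat C^\bullet$ is acyclic). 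Hence $H^0(\co(\tau_\infty)) = 0$, and the associated cohomology long exact sequence forces $\psi$ to be surjective on $H^0$; combined with the definition of $Z(M)$, this yields $0 \to Z(M) \to H^1_+(G_{F,S}, M) \to H^0(\co(\hat\pi_\infty \circ \widehat{res}_S)) \to 0$.

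Next, Proposition \ref{Pro cone}(1) applied to $\hat\pi_\infty \circ \widehat{res}_S$, Proposition \ref{pro coker and cone} giving $H^n(\co(\hat\pi_\infty)) \cong \bigoplus_{v\in S_f} H^{n+1}(F_v, M)$, and the identification $H^n(\co(\widehat{res}_S)) = \hat H^{n+1}_{c,cont}(G_{F,S}, M) \cong H^{2-n}(G_{F,S}, M^*)^\vee$ from \cite[Proposition 5.7.4]{Nekovar06}, together produce the exact sequence
\begin{equation*}
\bigoplus_{v\in S_f} H^0(F_v, M) \to H^2(G_{F,S}, M^*)^\vee \to H^0(\co(\hat\pi_\infty \circ \widehat{res}_S)) \to \bigoplus_{v\in S_f} H^1(F_v, M) \to H^1(G_{F,S}, M^*)^\vee.
\end{equation*}
Dualizing the inclusion $0 \to \sh^2_{S_f}(M^*) \to H^2(G_{F,S}, M^*) \to \bigoplus_{v\in S_f} H^2(F_v, M^*)$ and invoking the local duality $H^2(F_v, M^*)^\vee \cong H^0(F_v, M)$ identifies the cokernel of the first map above with $\sh^2_{S_f}(M^*)^\vee$. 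Thus $\sh^2_{S_f}(M^*)^\vee$ embeds in $H^0(\co(\hat\pi_\infty \circ \widehat{res}_S))$ as the kernel of the localization to $\bigoplus_{v\in S_f} H^1(F_v, M)$.

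Finally, the naturality in Proposition \ref{Pro cone}(1) ensures that $\psi$ commutes with these localization maps to $\bigoplus_{v\in S_f} H^1(F_v, M)$. Consequently $\sh^{1,+}_{S_f}(M) = \psi^{-1}(\sh^2_{S_f}(M^*)^\vee)$, and the restriction $\psi|_{\sh^{1,+}_{S_f}(M)}$ surjects onto $\sh^2_{S_f}(M^*)^\vee$ with kernel $Z(M)$, which by the same commutativity is automatically contained in $\sh^{1,+}_{S_f}(M)$. I expect the main obstacles to be the vanishing $H^0(\co(\tau_v)) = 0$ at archimedean places (which depends delicately on the interplay between ordinary and Tate cohomology in degrees $0$ and $1$) and the standard but nontrivial Poitou-Tate compatibility between the boundary map $\bigoplus_{v\in S_f} H^0(F_v, M) \to H^2(G_{F,S}, M^*)^\vee$ arising from the distinguished triangle and the Pontryagin dual of the localization on $H^2$; this compatibility must be checked at the level of continuous cochains in the framework of \cite{Nekovar06}.
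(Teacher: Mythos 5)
Your proposal is correct and follows essentially the same route as the paper: the same cubic diagram of mapping cones, the same key vanishing $H^{0}(\mathrm{Cone}(\tau_{\infty}))=0$, and the same identification of the target via $H^{n}(\mathrm{Cone}(\widehat{res}_{S}))\cong H^{2-n}(G_{F,S},M^{\ast})^{\vee}$ combined with local duality and the computation of $H^{n}(\mathrm{Cone}(\hat{\pi}_{\infty}))$. The only difference is bookkeeping---you first obtain the surjection $H^{1}_{+}(G_{F,S},M)\twoheadrightarrow H^{0}(\mathrm{Cone}(\hat{\pi}_{\infty}\circ\widehat{res}_{S}))$ with kernel $Z(M)$ and then pass to the kernels of the localization maps, whereas the paper restricts to those kernels first and dualizes the resulting sequence; the cochain-level compatibility you flag at the end is exactly what the paper also imports from \cite[Proposition 5.7.4]{Nekovar06}.
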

	\begin{proof}
	 Consider the commutative exact diagram
		\begin{equation*}
			\xymatrix@=2pc{0\ar[r]& C_{f}\ar[r]\ar@{=}[d]& C_{f}\oplus C_{\infty}\ar[r]^-{\pi_{\infty}}\ar[d]^-{(id,\tau_{\infty})}& C_{\infty}\ar[r]\ar[d]^-{\tau_{\infty}}&0\\
				0\ar[r]& C_{f}\ar[r]& C_{f}\oplus \hat{C}_{\infty}\ar[r]^-{\hat{\pi}_{\infty}}& \hat{C}_{\infty}\ar[r]&0.}
		\end{equation*}
		where \begin{equation*}
			C_{f}:=\bigoplus_{v\in S_{f}}C^{\bullet}_{cont}(G_{F_{v}},M),\quad C_{\infty}:=\bigoplus_{v\mid\infty}C^{\bullet}_{cont}(G_{F_{v}},M),\;\hat{C}_{\infty}:=\bigoplus_{v\mid\infty}\hat{C}^{\bullet}_{cont}(G_{F_{v}},M)
		\end{equation*}
		and  $\tau_{\infty}:=\oplus_{v\mid \infty}\tau_{v}.$ Since $\co(id)$ is acyclic, we obtain, by Propositions \ref{Pro cone} and \ref{pro coker and cone}, that		
		\begin{equation*}
			\co((id,\tau_{\infty}))\sim \co(\tau_{\infty})\quad\mbox{and}\quad \co(\pi_{\infty})\sim \co(\hat{\pi}_{\infty})
		\end{equation*}
		where $X\sim Y$ means that the complexes $X$ and $Y$ are quasi-isomorphic. \\
		Let $C(M)= C^{\bullet}_{cont}(G_{F,S},M).$ Using Proposition \ref{pro cub},  the  commutative cubic diagram
		\begin{equation*}
			\xymatrix@=1pc{  C(M)\ar[rrrr]^-{res_{S}}\ar[dr]^-{id}\ar[ddddd]^-{id}&  &  &  & C_{f}\oplus C_{\infty}\ar[dr]^-{(id,\tau_{\infty})}\ar[ddddd]^-{\pi_{\infty}}\\
				&  C(M) \ar[rrrr]^-{\widehat{res}_{S}}\ar[ddddd]^-{id}&  &  &  & C_{f}\oplus \hat{C}_{\infty}\ar[ddddd]^-{\hat{\pi}_{\infty}}\\
				&  &   &   &\\
				&  &  &   &  \\
				&  &  &  &   \\
				C(M)\ar[rrrr]^-{\pi_{\infty}\circ \, res_{S}}\ar[dr]^-{id}&  &  &  &  C_{\infty}\ar[dr]^-{\tau_{\infty}}\\
				&   C(M)\ar[rrrr]^-{\widehat{\pi}_{\infty}\circ\, \widehat{res}_{S}}&  &  &  &  \hat{C}_{\infty}}
		\end{equation*}	
		gives rise to the commutative diagram
		\begin{equation*}
			\xymatrix@=1.5pc{\co(res_{S})\ar[r]\ar[d]& \co(\widehat{res}_{S})\ar[r]\ar[d]& \co((id,\tau_{\infty})\ar[d]^-{qis\sim}\ar[r]& \co(res_{S})[1]\ar[d]\\
				\co(\pi_{\infty}\circ res_{S})\ar[r]\ar[d]  & \co(\widehat{\pi}_{\infty}\circ\widehat{res}_{S})\ar[r]\ar[d]    &  \co(\tau_{\infty})\ar[r]  & \co(\pi_{\infty}\circ res_{S})[1]\\
				\co(\pi_{\infty})\ar[r]^-{qis\sim}\ar[d]& \co(\hat{\pi}_{\infty})\ar[d]&   &  \\
				\co(res_{S})[1]\ar[r]& \co(\widehat{res}_{S})[1]&    &    }
		\end{equation*}	
		Taking the cohomology, we obtain for all $n\in \Z$ an exact commutative diagram    	
		\begin{equation*}
			\xymatrix@=2pc{   &  H^{n}(\mathrm{Cone}(id,\tau_{\infty}))\ar[d]\ar[r]^-{\sim}& H^{n}(\mathrm{Cone}(\tau_{\infty}))\ar[d]&   \\
				H^{n}(\mathrm{Cone}(\pi_{\infty}))\ar[r]\ar[d]^-{\wr}& H^{n+1}(\mathrm{Cone}(res_{S}))\ar[r]\ar[d]& H^{n+1}(\mathrm{Cone}(\pi_{\infty}\circ res_{S}))\ar[r]\ar[d]& 	H^{n+1}(\mathrm{Cone}(\pi_{\infty}))\ar[d]^-{\wr}\\
				H^{n}(\mathrm{Cone}(\hat{\pi}_{\infty}))\ar[r]& H^{n+1}(\mathrm{Cone}(\widehat{res}_{S}))\ar[d]\ar[r]& H^{n+1}(\mathrm{Cone}(\hat{\pi}_{\infty}\circ\widehat{res}_{S}))\ar[r]\ar[d]& 	H^{n+1}(\mathrm{Cone}(\hat{\pi}_{\infty}))\\
				& H^{n+1}(\mathrm{Cone}((id,\tau_{\infty})))\ar[r]^-{\sim}&
				H^{n+1}(\mathrm{Cone}(\tau_{\infty}))&  	
			}
		\end{equation*}		
		In particular, for $n=-1$ we have a commutative diagram
		\begin{equation}\label{default Poitou-Tate}
			\xymatrix@=1pc{    & H^{-1}(\mathrm{Cone}(\tau_{\infty}))\ar[d]&   \\
				& H^{0}(\mathrm{Cone}(\pi_{\infty}\circ res_{S}))\ar[r]\ar[d]& 	H^{0}(\mathrm{Cone}(\pi_{\infty}))\ar[d]^-{\wr}\\
				& H^{0}(\mathrm{Cone}(\hat{\pi}_{\infty}\circ\widehat{res}_{S}))\ar[r]\ar[d]& 	H^{0}(\mathrm{Cone}(\hat{\pi}_{\infty}))\\
				& 	H^{0}(\mathrm{Cone}(\tau_{\infty}))=0&  	
			}
		\end{equation}
		and an exact sequence
		\begin{equation}\label{modified Poitou-Tate duality}
			\xymatrix@=1pc{ 	H^{-1}(\co(\hat{\pi}_{\infty}))\ar[r]& H^{0}(\mathrm{Cone}(\widehat{res}_{S}))\ar[r]& \reallywidehat{\sh}_{S_{f}}^{1,+}(M)	\ar[r]& 0,
			}
		\end{equation}	
		where
		\begin{equation*}
			\reallywidehat{\sh}_{S_{f}}^{1,+}(M):=\ker(H^{0}(\mathrm{Cone}(\hat{\pi}_{\infty}\circ\widehat{res}_{S}))\longrightarrow H^{0}(\mathrm{Cone}(\hat{\pi}_{\infty}))).
		\end{equation*}
		From \eqref{description totally positive cone} and \eqref{cone pi} we deduce that:
		\begin{equation*}
			\mbox{$\sh_{S_{f}}^{1,+}(M)$}=\xymatrix@=1.5pc{\ker (H^{0}(\mathrm{Cone}(\pi_{\infty}\circ res_{S}))\ar[r]& 	H^{0}(\mathrm{Cone}(\pi_{\infty}))).}
		\end{equation*}
		So,	the diagram \eqref{default Poitou-Tate} leads to the following exact sequence
		\begin{equation}\label{exact Z(M)}
			\xymatrix@=2pc{0\ar[r]& Z(M)\ar[r]& \sh_{S_{f}}^{1,+}(M)\ar[r]& \reallywidehat{\sh}_{S_{f}}^{1,+}(M)\ar[r]&0,}
		\end{equation}
		where  $Z(M):=\ker(H^{0}(\mathrm{Cone}(\pi_{\infty}\circ res_{S}))\longrightarrow  H^{0}(\mathrm{Cone}(\hat{\pi}_{\infty}\circ\widehat{res}_{S}))).$\\
		Recall that, for all $n\geq -1,$    $H^{n}(\mathrm{Cone}(\widehat{res}_{S})):=\hat{H}^{n+1}_{c,cont}(G_{F,S},M).$ Then, by \cite[Proposition 5.7.4.]{Nekovar06} we have
		\begin{equation*}
			H^{n}(\co(\widehat{res}_{S})))\cong H^{2-n}(G_{F,S},M^{\ast})^{\vee}.
		\end{equation*}
		Dualizing the exact sequence \eqref{modified Poitou-Tate duality}, the local duality  \eqref{local duality} and the isomorphism (\ref{cone pi}) give the following isomorphism
		\begin{equation}\label{global dual}
			\reallywidehat{\sh}_{S_{f}}^{1,+}(M)\cong \mbox{$\sh_{S_{f}}^{2}(M^{\ast})^{\vee}.$}
		\end{equation}
		Finally, according to the exact sequence \eqref{exact Z(M)} and the isomorphism \eqref{global dual} we deduce the desired result.
	\end{proof}
	
	In the next lemma, we give an explicit description of the module
	\begin{equation*}
		Z(M):=\ker(H^{0}(\mathrm{Cone}(\pi_{\infty}\circ res_{S}))\longrightarrow H^{0}(\mathrm{Cone}(\hat{\pi}_{\infty}\circ\widehat{res}_{S}))).
	\end{equation*}
	Firstly, recall that for any infinite place $v$ of $F$ a fixed extension $F^{sep}\hookrightarrow F_{v}^{sep}$ of the embedding  $F\hookrightarrow F_{v}$  defines a continuous homomorphism
	$$\xymatrix@=1pc{G_{F_{v}}\ar@{^{(}->}[r]& G_{F,S},}$$
	hence a restriction map  $res_{v}=(res_{v}^{i})$
	\begin{equation*}
		\xymatrix@=1.5pc{    C^{\bullet}_{cont}(G_{F,S},M)^{0}=M\ar[r]^-{d^{0}_{C(M)}}\ar[d]^-{res^{0}_{v}}&C^{\bullet}_{cont}(G_{F,S},M)^{1}\ar[d]^-{res^{1}_{v}}\ar[r]&\cdots\\ C^{\bullet}_{cont}(G_{F_{v}},M)^{0}=M\ar[r]^-{d^{0}_{v}}&C^{\bullet}_{cont}(G_{F_{v}},M)^{1}\ar[r]&\cdots}
	\end{equation*}
	Secondly, observe that
	\begin{equation}\label{res0}
		res^{0}_{v}(x)=x\quad\mbox{for all $x\in\ker d^{0}_{C(M)}=M^{G_{F,S}},$}
	\end{equation}
	since the restriction map in degree $0$ is the inclusion map
	\begin{equation*}
		\xymatrix@=2pc{ M^{G_{F,S}}\ar@{^{(}->}[r] & M^{G_{F_{v}}}.}
	\end{equation*}
	Moreover, if $f: C^{\bullet}_{cont}(G_{F,S},M) \longrightarrow C^{\bullet}_{cont}(G_{F_{v}},M)$ and $res_{v}$ are homotopic, then
	\begin{equation*}
		f^{0}(x)=res^{0}_{v}(x)=x \quad\mbox{for all } x\in\ker d^{0}_{C(M)}.
	\end{equation*}	
	Indeed, since  $f$ and $res_{v}$ are homotopic,  there exists a morphism
	\begin{equation*}
		h^{1}:C^{\bullet}_{cont}(G_{F,S},M)^{1}\longrightarrow C^{\bullet}_{cont}(G_{F_{v}},M)^{0}\;\mbox{such that}\;
		f^{0}-res^{0}_{v}=h^{1}d^{0}_{C(M)}.
	\end{equation*}
	Thus, $f^{0}(x)=res^{0}_{v}(x)$ for all $x\in\ker d^{0}_{C(M)}.$\vskip 6pt
	Now, since $\hat{\pi}_{\infty}\circ\widehat{res}_{S}=\tau_{\infty}\circ \pi_{\infty}\circ res_{S}$ and  $(\tau_{\infty})^{i}=id$ for all integer $i\geq 0,$ we get
	\begin{equation*}
		d^{0}_{\co(\hat{\pi}_{\infty}\circ\, \widehat{res}_{S})}=d^{0}_{\co(\pi_{\infty}\circ\,  res_{S})}=\begin{pmatrix}
			d^{0}_{C_{\infty}}& (\pi_{\infty}\circ res_{S})^{1}\\
			&  \\
			0& -d^{1}_{C(M)}
		\end{pmatrix},
	\end{equation*}
	\begin{equation*}
		d^{-1}_{\co(\hat{\pi}_{\infty}\circ\, \widehat{res}_{S})}=\begin{pmatrix}
			\bigoplus_{v\mid\infty}N_{v}&(\pi_{\infty}\circ res_{S})^{0} \\
			&   \\
			0& -d^{0}_{C(M)}
		\end{pmatrix}\quad\mbox{and}\quad
		d^{-1}_{\co(\pi_{\infty}\circ\, res_{S})}=\begin{pmatrix}
			0& (\pi_{\infty}\circ res_{S})^{0}\\
			&  \\
			0& -d^{0}_{C(M)}
		\end{pmatrix}
	\end{equation*}
	where $N_{v}=\sum_{\sigma\in G_{F_{v}}}\sigma.$ Further, remark that $(\pi_{\infty}\circ res_{S})^{0}=\bigoplus_{v\mid\infty}res^{0}_{v}.$\\
	Therefore, we have an exact commutative diagram
	\begin{equation}\label{description of Z(M)}
		\xymatrix@=2pc{ 0\ar[r]& \mathrm{Im}\,d^{-1}_{\co(\pi_{\infty}\circ\, res_{S})}\ar[d]\ar[r]& \ker d^{0}_{\co(\pi_{\infty}\circ\,  res_{S})}\ar@{=}[d]\ar[r]&H^{0}(\mathrm{Cone}(\pi_{\infty}\circ res_{S}))\ar[d]\ar[r]& 0\\
			0\ar[r]& \mathrm{Im}\,d^{-1}_{\co(\hat{\pi}_{\infty}\circ\, \widehat{res}_{S})}\ar[r]& \ker d^{0}_{\co(\hat{\pi}_{\infty}\circ\, \widehat{res}_{S})} \ar[r]&H^{0}(\mathrm{Cone}(\hat{\pi}_{\infty}\circ\widehat{res}_{S}))\ar[r]&0}
	\end{equation}
	Recall that $Z(M):=\ker(H^{0}(\mathrm{Cone}(\pi_{\infty}\circ res_{S}))\longrightarrow H^{0}(\mathrm{Cone}(\hat{\pi}_{\infty}\circ\widehat{res}_{S}))).$
	\begin{lem}\label{Lemma Z(M)}
		We have the following isomorphism of $\Z_{2}[[G_{F, S}]]$-modules
		\begin{equation*}
			Z(M)\cong (\oplus_{v\mid\infty}N_{v})(\hat{C}^{-1}_{\infty})/I_{\infty},
		\end{equation*}
		where $\displaystyle{I_{\infty}:=(\oplus_{v|\infty} res_{v}^{0})(\ker(d^{0}_{C(M)}))\bigcap \, (\oplus_{v\mid\infty}N_{v})(\hat{C}^{-1}_{\infty})}.$
	\end{lem}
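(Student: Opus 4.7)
The plan is to apply the snake lemma to the commutative diagram \eqref{description of Z(M)} in order to express $Z(M)$ as a quotient of images of cone differentials, and then to identify that quotient explicitly using the matrix formulas displayed just above \eqref{description of Z(M)}. Since those formulas give $d^0_{\co(\pi_\infty\circ res_S)}=d^0_{\co(\hat\pi_\infty\circ\widehat{res}_S)}$, the middle vertical arrow of \eqref{description of Z(M)} is literally an identity; its kernel and cokernel vanish, so the snake lemma collapses the six-term exact sequence to a natural isomorphism
\begin{equation*}
Z(M)\;\xrightarrow{\;\sim\;}\;\mathrm{Im}\,d^{-1}_{\co(\hat\pi_\infty\circ\widehat{res}_S)}\big/\mathrm{Im}\,d^{-1}_{\co(\pi_\infty\circ res_S)}.
\end{equation*}

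Using the matrix formulas together with $C^{-1}_\infty=0$ (while $\hat C^{-1}_\infty$ is typically non-zero), I would next read off
\begin{equation*}
\mathrm{Im}\,d^{-1}_{\co(\pi_\infty\circ res_S)}=\bigl\{\bigl((\oplus_{v\mid\infty}res_v^{0})(m),\,-d^0_{C(M)}(m)\bigr):m\in M\bigr\}
\end{equation*}
and
\begin{equation*}
\mathrm{Im}\,d^{-1}_{\co(\hat\pi_\infty\circ\widehat{res}_S)}=\bigl\{\bigl((\oplus_{v\mid\infty}N_v)(c)+(\oplus_{v\mid\infty}res_v^{0})(m),\,-d^0_{C(M)}(m)\bigr):c\in\hat C^{-1}_\infty,\,m\in M\bigr\}.
\end{equation*}
Then I would introduce the evident $\Z_{2}[[G_{F,S}]]$-linear map
\begin{equation*}
\phi:(\oplus_{v\mid\infty}N_v)(\hat C^{-1}_\infty)\longrightarrow Z(M),\qquad (\oplus_{v\mid\infty}N_v)(c)\longmapsto\bigl[\bigl((\oplus_{v\mid\infty}N_v)(c),\,0\bigr)\bigr],
\end{equation*}
which is well-defined because the second coordinate is $0$, and surjective because any representative of a class on the right can be reduced modulo $\mathrm{Im}\,d^{-1}_{\co(\pi_\infty\circ res_S)}$ to have zero second coordinate.

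The only non-mechanical step, and the place where \eqref{res0} enters decisively, is the computation of $\ker\phi$. An element $(\oplus_{v\mid\infty}N_v)(c)$ lies in $\ker\phi$ precisely when there exists $m\in M$ with $(\oplus N_v)(c)=(\oplus_{v\mid\infty}res_v^{0})(m)$ and $d^0_{C(M)}(m)=0$, i.e.\ $m\in \ker d^0_{C(M)}=M^{G_{F,S}}$. Invoking \eqref{res0}, which identifies $res_v^0$ on $\ker d^0_{C(M)}$ with the canonical inclusion $M^{G_{F,S}}\hookrightarrow M^{G_{F_v}}$, this condition becomes exactly
\begin{equation*}
(\oplus_{v\mid\infty}N_v)(c)\in(\oplus_{v\mid\infty}res_v^{0})(\ker d^0_{C(M)})\cap(\oplus_{v\mid\infty}N_v)(\hat C^{-1}_\infty)=I_\infty,
\end{equation*}
so $\phi$ induces the claimed isomorphism. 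The main delicacy throughout is keeping careful track of which part of the cone complex each component belongs to and invoking \eqref{res0} at the right moment; once the snake lemma has reduced the problem to comparing two explicit images, everything else is a routine computation with the matrix formulas for the cone differentials.
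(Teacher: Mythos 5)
Your proof is correct, and it shares the paper's first step: using the displayed matrix formulas to see that the two cones agree in degrees $0$ and $1$, so the middle vertical arrow of \eqref{description of Z(M)} is the identity and $Z(M)\cong \mathrm{Im}\,d^{-1}_{\co(\hat{\pi}_{\infty}\circ\,\widehat{res}_{S})}/\mathrm{Im}\,d^{-1}_{\co(\pi_{\infty}\circ\, res_{S})}$. Where you genuinely diverge is in identifying this quotient. The paper goes through the auxiliary diagram \eqref{diagram calcule}: it first proves $\ker d^{-1}_{\co(\pi_{\infty}\circ\, res_{S})}=0$ (this is where \eqref{res0} is actually needed), obtains a surjection of $\hat{C}^{-1}_{\infty}$ onto the quotient with kernel $\ker d^{-1}_{\co(\hat{\pi}_{\infty}\circ\,\widehat{res}_{S})}$, and then analyses the restriction $\varphi$ of $\oplus_{v\mid\infty}N_{v}$, showing its image is $I_{\infty}$, before extracting the isomorphism from one more exact sequence. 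You instead write out both images explicitly and exhibit a direct surjection $\phi:(\oplus_{v\mid\infty}N_{v})(\hat{C}^{-1}_{\infty})\rightarrow Z(M)$ whose kernel is $I_{\infty}$; this is shorter, bypasses \eqref{diagram calcule} and the vanishing of $\ker d^{-1}_{\co(\pi_{\infty}\circ\, res_{S})}$, and in fact never needs \eqref{res0}: contrary to your remark that it enters ``decisively'', the kernel condition ``$(\oplus_{v\mid\infty}N_{v})(c)=(\oplus_{v\mid\infty}res_{v}^{0})(m)$ with $m\in\ker d^{0}_{C(M)}$'' is literally membership in $I_{\infty}$, so \eqref{res0} only serves a cosmetic identification there. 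Two points of precision you should tighten: well-definedness of $\phi$ is not about the second coordinate being zero but about the formula depending only on the value $(\oplus_{v\mid\infty}N_{v})(c)$ together with the fact that $((\oplus_{v\mid\infty}N_{v})(c),0)=d^{-1}_{\co(\hat{\pi}_{\infty}\circ\,\widehat{res}_{S})}(c,0)$ indeed lies in the relevant image; and for surjectivity, ``reduce to zero second coordinate'' is not quite enough by itself --- the correct reduction is to subtract from $d^{-1}_{\co(\hat{\pi}_{\infty}\circ\,\widehat{res}_{S})}(c,m)$ the specific element $((\oplus_{v\mid\infty}res_{v}^{0})(m),-d^{0}_{C(M)}(m))\in\mathrm{Im}\,d^{-1}_{\co(\pi_{\infty}\circ\, res_{S})}$, which simultaneously kills the second coordinate and leaves a first coordinate in $(\oplus_{v\mid\infty}N_{v})(\hat{C}^{-1}_{\infty})$. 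With these wordings fixed, your argument is a clean and somewhat more economical alternative to the paper's proof.
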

	\begin{proof}
		The commutative diagram \eqref{description of Z(M)} shows that
		\begin{equation*}
			Z(M) \cong \mathrm{Im}\,d^{-1}_{\co(\hat{\pi}_{\infty}\circ\, \widehat{res}_{S})}/\mathrm{Im}\,d^{-1}_{\co(\pi_{\infty}\circ\, res_{S})}.
		\end{equation*}
		Since $C^{-1}_{\infty}=0,$ we can identify $(\co(\pi_{\infty} \circ res_{S}))^{0}$  with $C(M)^{0}.$ Consider the following commutative diagram
		\begin{equation}\label{diagram calcule}
			\xymatrix@=2pc{ 0\ar[r]& \ker d^{-1}_{\co(\pi_{\infty}\circ\, res_{S})}\ar[d]\ar[r]&  C(M)^{0} \ar@{^{(}->}[d]\ar[r]&\mathrm{Im}\,d^{-1}_{\co(\pi_{\infty}\circ\, res_{S})}\ar@{^{(}->}[d]\ar[r]& 0\\
				0\ar[r]& \ker d^{-1}_{\co(\hat{\pi}_{\infty}\circ\, \widehat{res}_{S})}\ar[r]& \hat{C}^{-1}_{\infty} \bigoplus C(M)^{0} \ar[r]& \mathrm{Im}\,d^{-1}_{\co(\hat{\pi}_{\infty}\circ\, \widehat{res}_{S})}\ar[r]&0}
		\end{equation}
		The kernel $\ker d^{-1}_{\co(\pi_{\infty}\circ\, res_{S})}$ is trivial. Indeed, let $y \in C(M)^{0}$ such that $d^{-1}_{\co(\pi_{\infty}\circ\, res_{S})}(0,y)=0.$ This means that
		\begin{equation*}
			\begin{cases}
				res^{0}_{v}(y)=0,\quad\mbox{for all $v\mid \infty$}\, ;\\
				
				y\in \ker d^{0}_{C(M)}.
			\end{cases}
		\end{equation*}
		Hence, by \eqref{res0}, we get $res^{0}_{v}(y)=y=0$ since $y\in \ker d^{0}_{C(M)}.$\\
		Therefore, the commutative diagram \eqref{diagram calcule} induces the following exact sequence
		\begin{equation*}
			\xymatrix@=2pc{ 0\ar[r]& \ker d^{-1}_{\co(\hat{\pi}_{\infty}\circ\, \widehat{res}_{S})} \ar[r]^{\qquad \pi}& \hat{C}^{-1}_{\infty} \ar[r]& \mathrm{Im}\,d^{-1}_{\co(\hat{\pi}_{\infty}\circ\, \widehat{res}_{S})}/\mathrm{Im}\,d^{-1}_{\co(\pi_{\infty}\circ\, res_{S})}\ar[r]& 0,}
		\end{equation*}
		where $\pi$ is the canonical projection.\\
		Let $\varphi$ be the restriction of the map
		\begin{equation*}
			\xymatrix@=2pc{ \oplus_{v|\infty}N_{v}: \, \hat{C}^{-1}_{\infty} \ar[r]& \hat{C}^{0}_{\infty}}
		\end{equation*}
		to $\pi(\ker d^{-1}_{\co(\hat{\pi}_{\infty}\circ\,\widehat{res}_{S})}).$ We set
		\begin{equation*}
			I_{\infty}:= (\oplus_{v|\infty} res_{v}^{0})(\ker d^{0}_{C(M)})\bigcap\, (\oplus_{v|\infty}N_{v})(\hat{C}^{-1}_{\infty}).
		\end{equation*}
		We claim that the image of $\varphi$ is $I_{\infty}.$		
		Indeed, let $(x_{v})_{v|\infty}$ be an element of $\mathrm{im} \varphi.$ Since $\varphi$ is the restriction of $\oplus_{v|\infty} N_{v},$ it is clear that $(x_{v})_{v|\infty}$ belongs to $(\oplus_{v|\infty}N_{v})(\hat{C}^{-1}_{\infty}).$ Further, there exists $((a_{v})_{v|\infty}, y)$ in $\ker d^{-1}_{\co(\hat{\pi}_{\infty}\circ\,\widehat{res}_{S})}$ such that $\varphi((a_v)_{v|\infty})= (x_{v})_{v|\infty}.$ In particular, we get that $ (x_{v})_{v|\infty}$ is an element of $(\oplus_{v|\infty} res_{v}^{0})(\ker d^{0}_{C(M)}).$\\		
		Let's prove the other inclusion. For all $(x_{v})_{v|\infty}$ in $I_{\infty},$ there exist $y$ in $\ker d^{0}_{C(M)}$ and $(a_{v})_{v|\infty}$ in $\hat{C}^{-1}_{\infty}$ such that for each infinite place $v$
		\begin{equation*}
			x_{v}=res_{v}^{0}(y)=N_{v}(a_{v}).
		\end{equation*}
		Then $((a_{v})_{v|\infty},-y)$ belongs to $\ker d^{-1}_{\co(\hat{\pi}_{\infty}\circ\, \widehat{res}_{S})}$ and $\varphi(\pi(((a_{v})_{v|\infty},-y)))= (x_{v})_{v|\infty}.$\\
		These lead to the following commutative diagram
		\begin{equation*}
			\xymatrix@=1.5pc{
				0\ar[r]& \ker d^{-1}_{\co(\hat{\pi}_{\infty}\circ\, \widehat{res}_{S})} \ar@{>>}[d]^{\bar{\varphi}:=\varphi\circ \pi}\ar[r]^{\qquad \pi}& \hat{C}^{-1}_{\infty}  \ar[d]^{\oplus_{v|\infty}N_{v}}\ar[r]& \mathrm{Im}\,d^{-1}_{\co(\hat{\pi}_{\infty}\circ\, \widehat{res}_{S})}/\mathrm{Im}\,d^{-1}_{\co(\pi_{\infty}\circ\, res_{S})}\ar[d]\ar[r]&0\\
				0\ar[r]& I_{\infty} \ar[r]&  \hat{C}_{\infty}^{0} \ar[r]& \hat{C}^{0}/I_{\infty} \ar[r]& 0}
		\end{equation*}
		Observe that a pair $((a_{v})_{v|\infty}, y)$ in $\ker d^{-1}_{\co(\hat{\pi}_{\infty}\circ\, \widehat{res}_{S})}$ belongs to $\ker \bar{\varphi}$ exactely when $ (a_{v})_{v|\infty}$ is an element of $\ker( \oplus_{v|\infty}N_{v})$ and $y=0.$ In particular, $\pi$ induces an isomorphism between $\ker \bar{\varphi}$ and $\ker (\oplus_{v|\infty}N_{v}).$ Hence we obtain the following exact sequence
		\begin{equation*}
			\xymatrix@=1.5pc{ 0 \ar[r] &  \mathrm{Im}\,d^{-1}_{\co(\hat{\pi}_{\infty}\circ\, \widehat{res}_{S})}/\mathrm{Im}\,d^{-1}_{\co(\pi_{\infty}\circ\, res_{S})} \ar[r] & \hat{C}^{0}/I_{\infty} \ar[r]&  \hat{C}^{0}/(\oplus_{v|\infty}N_{v})(\hat{C}^{-1}_{\infty}) \ar[r]&  0.}
		\end{equation*}
		Therefore, $\mathrm{Im}\,d^{-1}_{\co(\hat{\pi}_{\infty}\circ\, \widehat{res}_{S})}/\mathrm{Im}\,d^{-1}_{\co(\pi_{\infty}\circ\, res_{S})}$ is isomorphic to
		\begin{equation*}
			\ker( \hat{C}^{0}/I_{\infty} \longrightarrow \hat{C}^{0}/(\oplus_{v|\infty}N_{v})(\hat{C}^{-1}_{\infty})),
		\end{equation*}
		which is none other than $(\oplus_{v|\infty}N_{v})(\hat{C}^{-1}_{\infty})/I_{\infty}.$ Then
		\begin{equation*}
			Z(M)\cong (\oplus_{v|\infty}N_{v})(\hat{C}^{-1}_{\infty})/I_{\infty}.
		\end{equation*}
	\end{proof}
	Summarizing, we get the main result of this paper namely, the global Poitou-Tate duality for totally positive Galois cohomology.
	\begin{theo}\label{theorem1}
		Let $M$ be a discrete or compact $\Z_{2}[[G_{F,S}]]$-module.
		\begin{enumerate}[label=(\roman*)]
			\item 	There is a perfect pairing
			\begin{equation*}
				\begin{array}{ccccc}
					\sh_{S_{f}}^{2,+}(M) & \times & \sh_{S_{f}}^{1}(M^{\ast}) & \xymatrix@=1.5pc{ \ar[r]&} &
					\Q_{2}/\Z_{2}.
				\end{array}
			\end{equation*}
			\item We have an exact sequence	
			\begin{equation*}
				\xymatrix@=2pc{0\ar[r]& (\oplus_{v|\infty}N_{v})(\hat{C}^{-1}_{\infty})/I_{\infty} \ar[r]&
					\sh_{S_{f}}^{1,+}(M)\ar[r]& \sh_{S_{f}}^{2}(M^{\ast})^{\vee}\ar[r]&0,}
			\end{equation*}
			where $\displaystyle{I_{\infty}=(\oplus_{v|\infty} res_{v}^{0})(\ker d^{0}_{C(M)})\bigcap\, (\oplus_{v|\infty}N_{v})(\hat{C}^{-1}_{\infty})}.$
		\end{enumerate} \hfill $\square$
	\end{theo}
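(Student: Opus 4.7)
The plan is that this theorem is essentially a packaging of the two propositions and the lemma that immediately precede it, so the proof is a matter of citing and combining rather than of introducing new ideas.

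For part (i), I would simply invoke the proposition proved just above, which gives the perfect pairing $\sh_{S_{f}}^{2,+}(M) \times \sh_{S_{f}}^{1}(M^{\ast}) \to \Q_{2}/\Z_{2}$. Recall that this was obtained by extracting the tail of the long exact sequence of Proposition \ref{exact sequence of finie places}, namely
\begin{equation*}
\bigoplus_{v\in S_{f}} H^{1}(F_{v},M) \longrightarrow H^{1}(G_{F,S},M^{\ast})^{\vee} \longrightarrow \sh_{S_{f}}^{2,+}(M) \longrightarrow 0,
\end{equation*}
and then dualizing while applying local duality \eqref{local duality} to identify $\sh_{S_{f}}^{2,+}(M)^{\vee} \cong \sh_{S_{f}}^{1}(M^{\ast})$. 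No new argument is required.

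For part (ii), the plan is to combine the second proposition of the section, which gives the short exact sequence
\begin{equation*}
0 \longrightarrow Z(M) \longrightarrow \sh_{S_{f}}^{1,+}(M) \longrightarrow \sh_{S_{f}}^{2}(M^{\ast})^{\vee} \longrightarrow 0,
\end{equation*}
with $Z(M) = \ker(H^{0}(\co(\pi_{\infty}\circ res_{S})) \to H^{0}(\co(\hat{\pi}_{\infty}\circ\widehat{res}_{S})))$, together with Lemma \ref{Lemma Z(M)}, which identifies this abstract kernel with the concrete module $(\oplus_{v\mid\infty} N_{v})(\hat{C}^{-1}_{\infty})/I_{\infty}$, where $I_{\infty} = (\oplus_{v\mid\infty} res_{v}^{0})(\ker d^{0}_{C(M)}) \cap (\oplus_{v\mid\infty} N_{v})(\hat{C}^{-1}_{\infty})$. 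Substituting this explicit form of $Z(M)$ into the exact sequence yields the statement of part (ii) verbatim.

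Since both statements are immediate assemblies of previously proved results, there is no genuine obstacle left at this stage; the real work was done in establishing the commutative cubic diagram of Proposition \ref{pro cub} (applied to the cube built from $res_{S}$, $\widehat{res}_{S}$, $\pi_{\infty}$, $\hat{\pi}_{\infty}$ and the inclusion $\tau_{\infty}$ of the cochain complex into the complete Tate complex), in identifying $H^{0}(\co(\widehat{res}_{S}))$ with $H^{2}(G_{F,S},M^{\ast})^{\vee}$ via Nekovář's compact-support duality, and in the concrete computation of Lemma \ref{Lemma Z(M)}. Thus the proof of Theorem \ref{theorem1} can be given in one line: it follows by combining the two propositions above with Lemma \ref{Lemma Z(M)}. \hfill $\square$
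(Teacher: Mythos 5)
Your proposal matches the paper exactly: Theorem \ref{theorem1} is stated there as a summary, with part (i) being the preceding proposition on the pairing $\sh_{S_{f}}^{2,+}(M)\times\sh_{S_{f}}^{1}(M^{\ast})\to\Q_{2}/\Z_{2}$ and part (ii) obtained by substituting the description of $Z(M)$ from Lemma \ref{Lemma Z(M)} into the short exact sequence of the second proposition. No gap; this is the same one-line assembly the authors intend.
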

	
	Let us compute the module $Z(M)$ for the compact $\Z_{2}[[G_{F, S}]]$-module $M=\Z_{2}(i),$ the twist "à la Tate" of the ring of dyadic integers. Recall that
	\begin{equation*}
		Z(M) \cong (\oplus_{v|\infty}N_{v})(\hat{C}^{-1}_{\infty})/I_{\infty}
	\end{equation*}
	and notice that the kernel $\ker (d^{0}_{C(M)})=M^{G_{F,S}}.$ Since $\Z_{2}^{G_{F,S}}=\Z_{2}$ and $\Z_{2}(i)^{G_{F,S}}=0$ if $i\neq 0,$ we get that
	\begin{equation*}
		I_{\infty} = \begin{cases}
			(\oplus_{v|\infty}N_{v})(\hat{C}^{-1}_{\infty}), \, \mbox{ if } i=0;\\
			\\
			0,\quad\qquad\qquad \quad \mbox{ if } i\neq 0.
		\end{cases}	
	\end{equation*}
	Further, remark that for any infinite place $v,$ the group $G_{F_{v}}$ is either trivial or equals to $\{1,\sigma\}$ according to $v$ is complex or real respectively,  where $\sigma$ is the complex conjugation. Hence if $v$ is complex, $N_{v}$ coincides with the identity of $\Z_{2}(i).$ If $v$ is real, $N_{v}$ is the multiplication by $1+\sigma.$ Therefore,
	\begin{equation*}
		Z(\Z_{2}(i))\cong \begin{cases}
			\qquad 0, \qquad \qquad\qquad \quad \qquad\quad\, \mbox{ if } i=0;\\\
			\underset{v \: is \: complex}{\bigoplus} \Z_{2}(i) \underset{v \: is \: real}{\bigoplus} 2\Z_{2}(i) ,\quad \mbox{ if $i\neq 0$ is even;} \\\
			\underset{v \: is \: complex}{\bigoplus} \Z_{2}(i), \quad\qquad \,\quad\qquad \mbox{ if $i$ is odd.}
		\end{cases}	
	\end{equation*}
	
	As a consequence of Theorem \ref{theorem1}  we then obtain the following:
	\begin{coro}\label{M=Z(i)}
		\begin{itemize}
			\item [(i)] We have an isomorphism
			\begin{center}$\sh_{S_{f}}^{2,+}(\mathbb{Z}_{2}(i))\cong \sh_{S_{f}}^{1}(\mathbb{Q}_{2}/\Z_{2}(1-i))^{\vee}.$\end{center}
			\item [(ii)] We have an exact sequence
			\begin{equation*}
				\xymatrix@=2pc{0\ar[r]& Z(\Z_{2}(i))\ar[r]& \sh_{S_{f}}^{1,+}(\mathbb{Z}_{2}(i))\ar[r]& \sh_{S_{f}}^{2}(\mathbb{Q}_{2}/\mathbb{Z}_{2}(1-i))^{\vee}\ar[r]&0.}
			\end{equation*}
			In particular, if $i=0$ or $i$ is odd and $F$ is a totally real number field, we have a perfect pairing
			\begin{equation*}
				\begin{array}{ccccc}
					\sh_{S_{f}}^{1,+}(\Z_{2}(i)) & \times & \sh_{S_{f}}^{2}(\Q_{2}/\Z_{2}(1-i)) & \xymatrix@=1.5pc{ \ar[r]&} &
					\Q_{2}/\Z_{2}.
				\end{array}
			\end{equation*}
		\end{itemize}
	\end{coro}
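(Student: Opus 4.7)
The plan is to obtain Corollary~\ref{M=Z(i)} as a direct specialization of Theorem~\ref{theorem1} to the compact $\Z_2[[G_{F,S}]]$-module $M=\Z_2(i)$, combined with the explicit computation of $Z(\Z_2(i))$ that is carried out in the paragraph immediately preceding the corollary. The only non-formal input needed is the identification of the Kummer dual: since $\mu_{2^\infty}\cong \Q_2/\Z_2(1)$ as a $G_{F,S}$-module, one has
\begin{equation*}
\Z_2(i)^{\ast}=\mathrm{Hom}(\Z_2(i),\mu_{2^\infty})\cong \Q_2/\Z_2(1-i),
\end{equation*}
which is the discrete module appearing on the right-hand side of both statements.

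For part (i), I would simply feed this Kummer dual into Theorem~\ref{theorem1}(i). The perfect pairing stated there specializes to a perfect pairing between $\sh_{S_f}^{2,+}(\Z_2(i))$ and $\sh_{S_f}^{1}(\Q_2/\Z_2(1-i))$, hence yields the required isomorphism after taking Pontryagin duals. For part (ii), I would apply Theorem~\ref{theorem1}(ii) to $M=\Z_2(i)$, and replace the abstract kernel $(\oplus_{v\mid\infty}N_v)(\hat C_\infty^{-1})/I_\infty$ by its explicit form $Z(\Z_2(i))$ computed just before the corollary. That computation is based on the observation that $\Z_2(i)^{G_{F,S}}$ is $\Z_2$ when $i=0$ and vanishes otherwise, so $I_\infty$ either equals the full image $(\oplus_{v\mid\infty} N_v)(\hat C_\infty^{-1})$ or is zero; combined with $N_v=\mathrm{id}$ at complex places and $N_v=1+\sigma$ (multiplication by $2$ on $\Z_2(i)$ when $i$ is even, multiplication by $0$ when $i$ is odd) at real places, one reads off the case distinction stated in the corollary.

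For the final \emph{in particular} assertion, the task reduces to showing that the left-hand term of the exact sequence vanishes in the two listed situations. When $i=0$ this is immediate from the first branch of the formula for $Z(\Z_2(i))$. When $i$ is odd and $F$ is totally real, the formula reduces to a direct sum over complex places of $F$; since there are none, $Z(\Z_2(i))=0$, so the middle map of the exact sequence of (ii) is an isomorphism, and combining it with local duality gives the perfect pairing. No step is a genuine obstacle here: the content has already been proved in Theorem~\ref{theorem1} and in the case-by-case computation of $Z(\Z_2(i))$; the corollary is essentially a bookkeeping statement that packages these results for the Tate twist and flags the totally real, odd-weight case (and the trivial $i=0$ case) where the obstruction $Z(\Z_2(i))$ disappears.
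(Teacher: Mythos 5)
Your proposal is correct and matches the paper's treatment: the corollary is stated there without a separate proof precisely because it is the direct specialization of Theorem \ref{theorem1} to $M=\Z_{2}(i)$ (with $\Z_{2}(i)^{\ast}\cong \Q_{2}/\Z_{2}(1-i)$) together with the case-by-case computation of $Z(\Z_{2}(i))$ given just before it. Your observation that $Z(\Z_{2}(i))=0$ when $i=0$, or when $i$ is odd and $F$ is totally real (no complex places), is exactly the intended justification of the final perfect pairing; the extra appeal to local duality there is unnecessary but harmless.
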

	\hfill $\square$\vskip 6pt
	As application, we give a description of $\sh_{S_{f}}^{2,+}(\mathbb{Z}_{2}(i))$ in terms of an Iwasawa module.\\
	Let $\mathcal{O}_{F,S}$ be the ring of $S$-integers of a number field $F.$ For $i\geq 1$ an integer, the positive \'{e}tale wild kernel (\cite[Definition 2.2]{AAM21}) is the group
	\begin{equation*}
		WK_{2i-2}^{\mbox{\'{e}t},+}\mathcal{O}_{F,S}:=\mbox{$\sh_{S_{f}}^{2,+}(\mathbb{Z}_{2}(i))$}=\xymatrix@=2pc{\ker(H^{2}_{+}(G_{F,S},\Z_{2}(i))\ar[r]& \bigoplus_{v\in S_{f}}H^{2}(F_{v},\Z_{2}(i))).}
	\end{equation*}
	For i =1, the group $WK_{2i-2}^{\mbox{\'{e}t},+}\mathcal{O}_{F,S}$ is isomorphic to the $2$-part of the narrow
	$S$-class group $A^{+}_{F,S}$ (see also Remark $2.3$ in [2]). In particular it depends on the set $S.$\\
	
	Let $F_{\infty}:=\underset{n\geq 0}{\bigcup} F_{n}$ be  the cyclotomic	$\Z_{2}$-extension of $F$ with Galois group  $\Gamma=\mathrm{Gal}(F_{\infty}/F),$ and let
	$X^{\prime, +}_{\infty}$ be the Galois group of the maximal $2$-extension of $F_{\infty},$ which is unramified at finite places and completely decomposed at all primes above $2.$	In particular,  we have $X^{\prime, +}_{\infty}=\underset{\longleftarrow}{\lim}A^{+}_{F_{n},S}.$ The following proposition is an analogue of Schneider's description of the classical  \'{e}tale wild kernel (\cite{Sc79}). 
	\begin{pro}\label{Iwasawa description}
		Let $i\geq 2$ be an integer. If either $i$ is odd, or $i$ is even and $\sqrt{-1}\in F,$ then
		\begin{equation*}
			WK_{2i-2}^{\mbox{\'{e}t},+}\mathcal{O}_{F,S}\cong X^{\prime, +}_{\infty}(i-1)_{\Gamma}.
		\end{equation*}
		In particular, in both cases we recover that the group $WK_{2i-2}^{\mbox{\'{e}t},+}\mathcal{O}_{F,S}$ is
		independent of the set $S$ containing the infinite and dyadic places of $F.$
	\end{pro}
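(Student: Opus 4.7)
The plan is to combine the Poitou--Tate duality of Corollary~\ref{M=Z(i)}~(i) with a Hochschild--Serre descent along the cyclotomic $\Z_{2}$-tower $F_{\infty}/F$, in the spirit of Schneider's classical description of the \'etale wild kernel. By Corollary~\ref{M=Z(i)}~(i),
$$WK_{2i-2}^{\mbox{\'{e}t},+}\mathcal{O}_{F,S} \;\cong\; \sh{^{1}_{S_{f}}(\Q_{2}/\Z_{2}(1-i))}^{\vee},$$
so it suffices to produce a canonical isomorphism $\sh{^{1}_{S_{f}}(\Q_{2}/\Z_{2}(1-i))}^{\vee}\cong X^{\prime,+}_{\infty}(i-1)_{\Gamma}$. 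A class in $\sh{^{1}_{S_{f}}(\Q_{2}/\Z_{2}(1-i))}$ parametrises a cyclic pro-$2$ extension of $F$ that is unramified outside $S$ and completely split at every finite place of $S$ --- equivalently, unramified at every finite place of $F$ and completely split above $2$ --- with no constraint at the archimedean places; this is precisely the ramification/splitting datum used to define $X^{\prime,+}_{\infty}$.

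The descent step relies on the Hochschild--Serre spectral sequence attached to $1 \to G_{F_{\infty},S} \to G_{F,S} \to \Gamma \to 1$. The hypothesis on $i$ is exactly what makes it clean: if $\sqrt{-1}\in F$ then $F_{\infty}=F(\mu_{2^{\infty}})$, while if $i$ is odd then $1-i$ is even, so the $\{\pm 1\}$-component of $\mathrm{Gal}(F(\mu_{2^{\infty}})/F_{\infty})$ acts trivially on $\Z_{2}(1-i)$. In either case $G_{F_{\infty},S}$ acts trivially on $\Q_{2}/\Z_{2}(1-i)$, and the residual $\Gamma$-action on the coefficients is by the cyclotomic character $\chi_{\mathrm{cyc}}^{1-i}$, which is non-trivial on $\Gamma$ for $i\geq 2$. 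Hence $(\Q_{2}/\Z_{2}(1-i))^{\Gamma}=0$, the edge terms of the five-term sequence vanish, and I obtain
$$H^{1}(G_{F,S},\Q_{2}/\Z_{2}(1-i)) \;\xrightarrow{\sim}\; H^{1}(G_{F_{\infty},S},\Q_{2}/\Z_{2}(1-i))^{\Gamma}.$$
Running the same argument locally at each $v\in S_{f}$ shows that the $\sh$-conditions also descend.

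Over $F_{\infty}$ the action on $\Q_{2}/\Z_{2}(1-i)$ is trivial, so $H^{1}(G_{F_{\infty},S},\Q_{2}/\Z_{2}(1-i))$ is the group of continuous characters of the maximal abelian pro-$2$ quotient of $G_{F_{\infty},S}$, equipped with a twisted $\Gamma$-action; the $\sh$-subgroup consists of those characters that vanish on every decomposition subgroup above a finite place of $S$. By class field theory this subgroup is identified with $\mathrm{Hom}_{\mathrm{cont}}(X^{\prime,+}_{\infty},\Q_{2}/\Z_{2})$, twisted by $1-i$. Taking $\Gamma$-invariants, Pontryagin dualising, and using the standard identification $\mathrm{Hom}_{\Gamma}(X,\Q_{2}/\Z_{2}(1-i))^{\vee}\cong X(i-1)_{\Gamma}$ then produces the claimed isomorphism. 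The $S$-independence assertion follows at once, since the right-hand side is defined using only the infinite and dyadic places.

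The hard part, as I see it, is the local bookkeeping at non-dyadic primes of $S$: one has to verify that imposing ``completely split at primes above a non-dyadic $v\in S_{f}$'' becomes vacuous in the inverse limit over the cyclotomic tower, so that the character group obtained after descent matches $X^{\prime,+}_{\infty}$ (cut out by splitting only above $2$) rather than a proper quotient. This is precisely the step that encodes the $S$-independence and that distinguishes the totally positive situation from Schneider's classical setup.
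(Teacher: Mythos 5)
Your route is essentially the paper's: reduce via Corollary \ref{M=Z(i)}~(i) to computing $\sh{_{S_{f}}^{1}(\Q_{2}/\Z_{2}(1-i))}$, descend along the cyclotomic $\Z_{2}$-tower by inflation--restriction (globally and at each $v\in S_{f}$), identify the resulting kernel with $\mathrm{Hom}(X^{\prime,+}_{\infty},\Q_{2}/\Z_{2}(1-i))^{\Gamma}$ by class field theory, and dualize; this is exactly the proof in the paper. One step of your justification is wrong as stated, though easily repaired: it is false that $(\Q_{2}/\Z_{2}(1-i))^{\Gamma}=0$. The invariants are the kernel of multiplication by $\chi^{1-i}(\gamma)-1$ on $\Q_{2}/\Z_{2}$, a nontrivial finite cyclic group; and in any case the relevant edge terms of the five-term sequence are $H^{1}(\Gamma,\Q_{2}/\Z_{2}(1-i))$ and $H^{2}(\Gamma,\Q_{2}/\Z_{2}(1-i))$, cohomology of the full module, not of its invariants. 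What you need, and what is true, is $H^{1}(\Gamma,\Q_{2}/\Z_{2}(1-i))=0$: for the procyclic group $\Gamma\cong\Z_{2}$ acting on a discrete torsion module this $H^{1}$ is the module of coinvariants, and $\gamma-1$ acts on the divisible group $\Q_{2}/\Z_{2}$ as multiplication by the nonzero $2$-adic number $\chi^{1-i}(\gamma)-1$, hence surjectively; together with $H^{2}(\Gamma,\cdot)=0$ this gives the descent isomorphism, and the same argument applies to the decomposition subgroups $\Gamma_{v}$ (which are open in $\Gamma$, since no finite place splits completely in $F_{\infty}/F$). This is precisely the content of the lemma of \cite{AAM21} invoked in the paper, so your argument goes through once the invariants/coinvariants slip is corrected.

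On the point you single out as ``the hard part'': the fact that, over $F_{\infty}$, an abelian pro-$2$ extension unramified at a non-dyadic finite place is automatically completely split there (the residue Galois group at such a place of $F_{\infty}$ has trivial pro-$2$ part, the $\Z_{2}$-part of $\widehat{\Z}$ having been exhausted by the cyclotomic tower) is indeed what makes the character group after descent equal to $\mathrm{Hom}(X^{\prime,+}_{\infty},\Q_{2}/\Z_{2}(1-i))^{\Gamma}$ and what yields the $S$-independence. You state this but do not verify it; the paper is equally terse, absorbing it into the identification $X^{\prime,+}_{\infty}=\varprojlim A^{+}_{F_{n},S}$ and the displayed equality for $\sh{_{S_{f}}^{1}}$, so no discrepancy of approach arises, but a complete write-up should include this standard Frobenius argument.
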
	
	\begin{proof} Let $j=1-i.$ 	Consider  the following  exact commutative diagram
		\begin{equation*}
			\xymatrix@=1pc{
				H^{1}(\Gamma,\Q_{2}/\Z_{2}(j))\ar@{^{(}->}[r]\ar[d]&
				H^{1}(G_{F,S},\Q_{2}/\Z_{2}(j))\ar[r]\ar[d]&
				H^{1}(G_{F_{\infty},S},\Q_{2}/\Z_{2}(j))^{\Gamma}\ar[d]\ar[r]&0\\
				\bigoplus_{v\in
					S_{f}}H^{1}(\Gamma_{v},\Q_{2}/\Z_{2}(j))\ar@{^{(}->}[r]&
				\bigoplus_{v\in
					S_{f}}H^{1}(F_{v},\Q_{2}/\Z_{2}(j))\ar[r]&\bigoplus_{v\in
					S_{f}}H^{1}(F_{v,\infty},\Q_{2}/\Z_{2}(j))^{\Gamma_{v}}\ar[r]&0}
		\end{equation*}
		where	$F_{v,\infty}$ is the cyclotomic $\Z_{2}$-extension of $F_{v}$ and
		$\Gamma_{v}=\mathrm{Gal}(F_{v,\infty}/F_{v})$ is the decomposition group of $v$ in 	$F_{\infty}/F.$
		By \cite[Lemma 3.1]{AAM21}, we have
		\begin{equation*}
			H^{1}(\Gamma,\Q_{2}/\Z_{2}(j))=0\,\,\mbox{and}\,\,H^{1}(\Gamma_{v},\Q_{2}/\Z_{2}(j))=0\;\;\mbox{for all $v\in S_{f}$},
		\end{equation*}
		and then
		\begin{eqnarray*}
			\mbox{$\sh_{S_{f}}^{1}(\Q_{2}/\Z_{2}(j))$} &=&\xymatrix@=1.5pc{\ker( H^{1}(G_{F_{\infty},S},\Q_{2}/\Z_{2}(j))^{\Gamma}
				\ar[r]& \bigoplus_{v\in
					S_{f}}H^{1}(F_{v,\infty},\Q_{2}/\Z_{2}(j))^{\Gamma})}  \\
			&=&\mathrm{Hom}(X^{\prime, +}_{\infty},
			\Q_{2}/\Z_{2})(-j)^{\Gamma}.
		\end{eqnarray*}
		Hence, using Theorem \ref{theorem1}, we obtain the isomorphism
		\begin{equation*}
			WK_{2i-2}^{\mbox{\'{e}t},+}\mathcal{O}_{F,S}\cong X^{\prime, +}_{\infty}(i-1)_{\Gamma}.
		\end{equation*}
		In particular, the group $WK_{2i-2}^{\mbox{\'{e}t},+}\mathcal{O}_{F,S}$ is
		independent of the set $S$ containing the infinite and dyadic places of $F.$ From now on, we adopt the   following notation $$WK_{2i-2}^{\mbox{\'{e}t},+}\mathcal{O}_{F,S}:=  WK_{2i-2}^{\mbox{\'{e}t},+}F.$$	
	\end{proof}	
	Let  $F_{\infty}=\cup_{n}F_{n}$ be the cyclotomic $\Z_{2}$-extension of $F$ and for
	$n\geq 0,$  $G_{n}=\mathrm{Gal}(F_{n}/F).$ Since for all $n\geq 0$
	\begin{equation*}
		(X^{\prime, +}_{\infty}(i-1)_{\Gamma_{n}})_{G_{n}} \cong X^{\prime, +}_{\infty}(i-1)_{\Gamma},
	\end{equation*}
	the above description of the positive \'{e}tale wild kernel, shows immediately that the positive \'{e}tale wild kernel satisfies Galois co-descent in the cyclotomic tower (see as well \cite[Corollary $3.3$]{AAM21}).
	\begin{coro}\label{corrollary codescent}
		If either $i$ is odd, or $i$ is even and $\sqrt{-1}\in F,$ then the positive \'{e}tale wild kernel satisfies Galois co-descent in the cyclotomic $\Z_{2}$-extension:
		\begin{equation*}
			(WK_{2i-2}^{\mbox{\'{e}t},+}F_{n})_{G_{n}}\cong 	WK_{2i-2}^{\mbox{\'{e}t},+}F.
		\end{equation*}
	\end{coro}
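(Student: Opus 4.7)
The plan is to deduce this immediately from Proposition~\ref{Iwasawa description} applied at each level~$F_n$, combined with the transitivity of coinvariants which is already recorded in the paper just before the corollary.

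First I would check that the hypothesis of Proposition~\ref{Iwasawa description} propagates up the cyclotomic tower: if $i$ is odd the hypothesis is unconditional, while if $i$ is even and $\sqrt{-1}\in F$ then clearly $\sqrt{-1}\in F_n$ as well. Hence the proposition applies with $F$ replaced by $F_n$. A key observation here is that the cyclotomic $\Z_{2}$-extension of $F_n$ coincides with $F_\infty$; therefore the Iwasawa module attached to $F_n$ is the \emph{same} module $X^{\prime,+}_\infty$, only viewed as a module over $\Gamma_n=\mathrm{Gal}(F_\infty/F_n)\subset\Gamma$.

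Applying Proposition~\ref{Iwasawa description} to $F_n$ thus yields
\begin{equation*}
WK_{2i-2}^{\mbox{\'{e}t},+}F_{n}\;\cong\;X^{\prime,+}_\infty(i-1)_{\Gamma_{n}}.
\end{equation*}
Taking $G_n=\Gamma/\Gamma_n$-coinvariants of both sides and invoking the identity $(X^{\prime,+}_\infty(i-1)_{\Gamma_n})_{G_n}\cong X^{\prime,+}_\infty(i-1)_{\Gamma}$ stated just above the corollary (a direct consequence of transitivity of coinvariants for the short exact sequence $1\to \Gamma_n\to\Gamma\to G_n\to 1$), I obtain
\begin{equation*}
(WK_{2i-2}^{\mbox{\'{e}t},+}F_{n})_{G_{n}}\;\cong\;X^{\prime,+}_\infty(i-1)_{\Gamma}\;\cong\;WK_{2i-2}^{\mbox{\'{e}t},+}F,
\end{equation*}
where the last isomorphism is Proposition~\ref{Iwasawa description} applied at $F$ itself.

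There is essentially no obstacle here; the corollary is a structural consequence of the Iwasawa-theoretic description. The only point that deserves a word of care is confirming that Proposition~\ref{Iwasawa description} remains applicable at every finite layer $F_n$ (which is why the parity hypothesis is stated again in the corollary), and that the transition map in the cyclotomic tower is indeed the natural codescent map induced by functoriality of the $\sh^{2,+}_{S_f}$-construction, so that the abstract isomorphism above is the geometrically expected one.
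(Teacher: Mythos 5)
Your proposal is correct and follows essentially the same route as the paper: the paper likewise applies Proposition \ref{Iwasawa description} at each layer $F_n$ (whose cyclotomic $\Z_2$-extension is again $F_\infty$, with Galois group $\Gamma_n$) and then invokes the identity $(X^{\prime,+}_{\infty}(i-1)_{\Gamma_n})_{G_n}\cong X^{\prime,+}_{\infty}(i-1)_{\Gamma}$ stated just before the corollary. Your extra remarks on the hypothesis propagating to $F_n$ and on the compatibility of the codescent map are fine but not needed beyond what the paper records.
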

	\begin{rem}
		For a number field $F,$ we now that $WK_{2}^{\mbox{\'{e}t},+}F\cong WK_{2}^{\mbox{\'{e}t}}F$ (cf. \cite[Proposition 2.4]{AAM21}). Furthermore, if $\sqrt{-1}\in F$ the above result has been proved in \cite[Theorem 2.18]{KoMo}.
	\end{rem}
	\bibliographystyle{plain}	
	\bibliography{PTDPC}{}

\end{document}